\documentclass [twoside,reqno,12pt] {amsart}


\usepackage{amsfonts}
\usepackage{amssymb}
\usepackage{a4}

\newtheorem{thm}{Theorem}[section]
\newtheorem{cor}[thm]{Corollary}

\newtheorem{prop}[thm]{Proposition}

\theoremstyle{definition}

\theoremstyle{remark}


\numberwithin{equation}{section}

\renewcommand{\Re}{\hbox{Re}\,}
\renewcommand{\Im}{\hbox{Im}\,}

\newcommand{\C}{\mathbb{C}}

\renewcommand{\div}{\operatorname{div}}

\newcommand{\R}{\mathbb{R}}

\parindent0pt
\parskip6pt

\def\tilde{\widetilde}
\def \bfo {\begin {eqnarray*} }
\def \efo {\end {eqnarray*} }
\def \ba {\begin {eqnarray*} }
\def \ea {\end {eqnarray*} }
\def \beq {\begin {eqnarray}}
\def \eeq {\end {eqnarray}}

\def \p {\partial}

\def\tilde{\widetilde}
\def \bfo {\begin {eqnarray*} }
\def \efo {\end {eqnarray*} }
\def \ba {\begin {eqnarray*} }
\def \ea {\end {eqnarray*} }
\def \beq {\begin {eqnarray}}
\def \eeq {\end {eqnarray}}

\def \p {\partial}


\begin{document}

 \title[Determining a first order perturbation ]{Determining a first order perturbation of the biharmonic operator by partial boundary measurements}

\author[Krupchyk]{Katsiaryna Krupchyk}

\address
        {K. Krupchyk, Department of Mathematics and Statistics \\
         University of Helsinki\\
         P.O. Box 68 \\
         FI-00014   Helsinki\\
         Finland}

\email{katya.krupchyk@helsinki.fi}

\author[Lassas]{Matti Lassas}

\address
        {M. Lassas, Department of Mathematics and Statistics \\
         University of Helsinki\\
         P.O. Box 68 \\
         FI-00014   Helsinki\\
         Finland}

\email{matti.lassas@helsinki.fi}

\author[Uhlmann]{Gunther Uhlmann}

\address
       {G. Uhlmann, Department of Mathematics\\
       University of Washington\\
       Seattle, WA  98195-4350\\
       and
       Department of Mathematics\\ 
       340 Rowland Hall\\
        University of California\\
        Irvine, CA 92697-3875\\
       USA}
\email{gunther@math.washington.edu}

\maketitle

\begin{abstract} 

We consider an operator  $\Delta^2 + A(x)\cdot D+q(x)$ with the Navier boundary conditions  on a bounded domain in  $\R^n$, $n\ge 3$. We show that a first order perturbation $A(x)\cdot D+q$  Êcan be determined uniquely by measuring the Dirichlet--to--Neumann map on possibly very small subsets of the boundary of the domain.  Notice that the corresponding result does not hold in general for a first order perturbation of the Laplacian. 

\end{abstract}

\section{Introduction and statement of results}

Let $\Omega\subset \R^n$, $n\ge 3$,  be a bounded  simply connected domain with $C^\infty$ connected boundary, and let us consider the following equation,
\[
\mathcal{L}_{A,q}(x,D)u=0\quad  \textrm{in}\quad \Omega,
\]
where
\[
\mathcal{L}_{A,q}(x,D)=\Delta^2+\sum_{j=1}^n A_j(x)D_j+q(x)=\Delta^2+A(x)\cdot D+q(x),
\]
$D=i^{-1}\nabla$, 
$A=(A_j)_{1\le j\le n}\in C^4(\overline{\Omega},\C^n)$,  and $q\in L^\infty(\Omega,\C)$. 
The operator $\mathcal{L}_{A,q}$, equipped with  the domain
\begin{equation}
\label{eq_domain_L}
\mathcal{D}(\mathcal{L}_{A,q})=\{ u\in H^{4}(\Omega): u|_{\p\Omega}=(\Delta u)|_{\p\Omega}=0\},
\end{equation}
is  an unbounded closed operator on $L^2(\Omega)$ with purely discrete spectrum, see \cite{Grubbbook2009}. 
The boundary conditions in \eqref{eq_domain_L} are called the Navier boundary conditions. 
Physically,  the operator $\mathcal{L}_{A,q}$ with the domain $\mathcal{D}(\mathcal{L}_{A,q})$  arises when considering the equilibrium configuration of an elastic plate which is hinged along the boundary, see \cite{GGS_book}.   Let us make the following assumption, 
\begin{itemize}
\item[\textbf{(A)}] $0$ is not an eigenvalue of $\mathcal{L}_{A,q}(x,D):\mathcal{D}(\mathcal{L}_{A,q})\to L^2(\Omega)$. 
\end{itemize}
Under the assumption (A), for any $(f_0,f_1)\in 
H^{7/2}(\p \Omega)\times H^{3/2}(\p \Omega)$,  
the boundary value problem 
\begin{equation}
\label{eq_Dirichlet_problem}
\begin{aligned}
\mathcal{L}_{A,q} u&=0\quad \textrm{in} \quad \Omega, \\
 u&=f_0\quad \textrm{on} \quad \p \Omega,\\
 \Delta u&=f_1\quad \textrm{on} \quad \p \Omega,
\end{aligned}
\end{equation}
has a unique solution $u\in H^{4}(\Omega)$. 
Let $\nu$ be the unit outer normal to the boundary $\p \Omega$.  We then  define the Dirichlet--to--Neumann map $\mathcal{N}_{A,q}$ by
\begin{align*}
\mathcal{N}_{A,q}: H^{7/2}(\p \Omega)\times H^{3/2}(\p \Omega) \to & H^{5/2}(\p \Omega)\times H^{1/2}(\p \Omega) , \\ 
\mathcal{N}_{A,q}(f_0,f_1)= &  (\p_\nu u|_{\p \Omega}, \p_\nu(\Delta u)|_{\p \Omega})
\end{align*}
where $u\in H^{4}(\Omega)$ is the solution to the problem  \eqref{eq_Dirichlet_problem}.  Let us also introduce the set of the Cauchy data $\mathcal{C}_{A,q}$ for the operator $\mathcal{L}_{A,q}$ defined as follows,
\[
\mathcal{C}_{A,q}=\{(u|_{\p \Omega}, (\Delta u)|_{\p \Omega},  \p_\nu u|_{\p \Omega}, \p_\nu(\Delta u)|_{\p \Omega} ): u\in H^{4}(\Omega), \mathcal{L}_{A,q}u=0\ \textrm{in }\Omega\}. 
\]
When the assumption (A) holds,  the set $\mathcal{C}_{A,q}$ is the graph of the Dirichlet--to--Neumann map $\mathcal{N}_{A,q}$.

It was shown in \cite{KrupLassasUhlmann} that the first order perturbation $A(x)\cdot D+ q(x)$ of the polyharmonic operator $(-\Delta)^m$, $m\ge 2$,  can be recovered from the knowledge of the set of the Cauchy data $\mathcal{C}_{A,q}$ on the boundary of $\Omega$.  Notice that for the corresponding problem for a first order perturbation of the Laplacian, this is no longer true, due to the gauge invariance of boundary measurements, see \cite{KKL,NakSunUlm_1995, Sun_1993}. 
 In this case, the first order perturbation can be recovered only modulo a gauge transformation,  \cite{NakSunUlm_1995,  Sun_1993}.

In this paper we are concerned with the inverse problem of determining  the first order perturbation $A(x)\cdot D+ q(x)$ of the biharmonic operator from the knowledge of the Dirichlet--to--Neumann map $\mathcal{N}_{A,q}$, given  only on a part of the boundary $\p \Omega$.

In many applications, say,  arising in geophysics,    performing measurements on the entire boundary could be either impossible or too cost consuming.  One is therefore naturally led to an inverse boundary value problem with partial measurements. Substantial progress has been made recently on the partial data problems in the context of electrical impedance tomography as well as the 
Schr\"odinger equation.  Specifically,  in the paper \cite{BukhUhl_2002} it was shown that an unknown conductivity is determined uniquely by performing voltage--to--current measurements on, roughly speaking, a half of the boundary. The main technical tool was a boundary Carleman estimate with a linear weight.  The Carleman estimates approach to the partial data problem was very much advanced in the work 
\cite{KenSjUhl2007}.  Here, rather than working with linear weights, a broader class of limiting Carleman weights was introduced and employed.  The work \cite{KenSjUhl2007} contains a global uniqueness result for the conductivity equation,  assuming that the voltage--to--current map is measured on a possibly very small subset of the boundary, with  the precise shape depending
on the geometry of the boundary.  The limiting Carleman weights approach of  \cite{KenSjUhl2007} has led to subsequent important developments in the partial data problem for the magnetic Schr\"odinger operator  \cite{DKSU_2007}, \cite{KnuSalo_2007} and  the Dirac system  
\cite{Salo_Tzou_2010}.   To the best of our knowledge, applications of this approach to partial data problems for  other important equations and systems of mathematical physics have not yet been explored.  The purpose of this paper is to apply the techniques of   Carleman estimates to the partial data problem for the perturbed biharmonic operator. 

We should also mention 
another approach to the partial data problems for the conductivity equation, which  is due to \cite{Isakov_2007}, and which is based on reflection arguments.  In this approach, the subset of the boundary, where the measurements are performed is such that the inaccessible part of the boundary is a subset of a hyperplane or a sphere.  The work \cite{COS_2009} gives a partial data result
analogous to \cite{Isakov_2007} for the Maxwell equations.

Let us now proceed to describe the precise assumptions and results. Let $x_0\in \R^n\setminus\overline{\textrm{ch}(\Omega)}$,  where  $\textrm{ch}(\Omega)$ is the convex hull of $\Omega$. Following \cite{KenSjUhl2007}, we define the front face of $\p \Omega$ with respect to $x_0$ by
\begin{equation}
\label{eq_intr_F(x_0)}
F(x_0)=\{x\in \p \Omega: (x-x_0)\cdot \nu(x)\le 0\},
\end{equation}
and let $\tilde F$ be an open neighborhood of $F(x_0)$ in $\p \Omega$. 
The main result of this paper is as follows. 

\begin{thm}
\label{thm_main} Let $\Omega\subset \R^n$, $n\ge 3$,  be  a bounded  simply connected domain with $C^\infty$ connected boundary,  and let $A^{(1)}, A^{(2)}\in  C^4(\overline{\Omega},\C^n)$ and $q^{(1)},q^{(2)}\in L^\infty(\Omega,\C)$, be such that the assumption (A) is satisfied for both operators. 
If 
\[
\mathcal{N}_{A^{(1)},q^{(1)}}(f_0,f_1)|_{\tilde F}=\mathcal{N}_{A^{(2)},q^{(2)}}(f_0,f_1)|_{\tilde F}\quad \textrm{for all  } (f_0,f_1)\in H^{7/2}(\p \Omega)\times H^{3/2}(\p \Omega), 
\]
 then $A^{(1)}=A^{(2)}$ and $q^{(1)}=q^{(2)}$ in $\Omega$.  
\end{thm}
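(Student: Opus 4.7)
The strategy is to follow the Carleman estimate plus complex geometric optics (CGO) framework initiated in \cite{KenSjUhl2007} for the Schr\"odinger equation, adapted to the fourth order operator $\mathcal{L}_{A,q}$ along the lines of \cite{KrupLassasUhlmann}. The natural limiting Carleman weight in this setting is $\varphi(x)=\log|x-x_0|$, so that $\nabla\varphi\cdot\nu$ is non-positive precisely on $F(x_0)$. I would first establish a Carleman estimate with semiclassical parameter $h = 1/\tau$ of the form
\[
h^4\|e^{\varphi/h}\mathcal{L}_{A,q}u\|_{L^2(\Omega)}^2 + h^3\int_{F(x_0)}|\p_\nu\varphi|\,|h^2\p_\nu\Delta u|^2\,dS \gtrsim \sum_{k=0}^{3} h^{2k+1}\|e^{\varphi/h}h^k\nabla^k u\|^2,
\]
for $u\in C^\infty(\overline\Omega)$ vanishing with $\Delta u$ to appropriate order on $\p\Omega$, with analogous control of the outgoing boundary terms on $\p\Omega\setminus F(x_0)$. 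The key point is that such an estimate for $\Delta^2$ is obtained by squaring the estimate for $\Delta$ proved in \cite{KenSjUhl2007}, and the first order perturbation $A\cdot D+q$ can be absorbed thanks to the gain of powers of $h$. From the Carleman estimate one derives, by a Hahn--Banach argument, solvability of an inhomogeneous boundary value problem for $\mathcal{L}_{A,q}^\ast$ with small $L^2$ source and prescribed vanishing on $\tilde F$, with the correct exponential weight.

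With the solvability in hand, the next step is to construct CGO solutions of the form
\[
u(x;h) = e^{(-\varphi+i\psi)/h}\bigl(a_0(x) + h\, a_1(x) + h^2 r(x;h)\bigr),
\]
where $\psi$ is a conjugate weight to $\varphi$ (in $\R^n\setminus\{x_0\}$ one may use $\psi(x) = \mathrm{dist}_{\mathbb{S}^{n-1}}(\,\cdot\,,\omega)$ composed with a suitable map, as in \cite{DKSU_2007, KenSjUhl2007}), and $a_0, a_1$ are determined by transport equations coming from applying $\mathcal{L}_{A,q}$ to the ansatz. Unlike for the Laplacian, the biharmonic operator produces two layers of transport equations, and a careful choice of $a_0$ together with the next correction $a_1$ encoding a first order contribution from $A$ is what eventually kills the gauge ambiguity. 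The remainder $r$ is estimated through the solvability result proved from the Carleman estimate, yielding $\|r\|_{H^k_{scl}} = O(1)$ for $k\leq 4$.

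The third step is to exploit the hypothesis $\mathcal{N}_{A^{(1)},q^{(1)}} = \mathcal{N}_{A^{(2)},q^{(2)}}$ on $\tilde F$. A standard integration by parts using the Navier boundary conditions gives, for $u_1$ a solution of $\mathcal{L}_{A^{(1)},q^{(1)}}u_1=0$ and $v$ a solution of the adjoint problem $\mathcal{L}_{A^{(2)},q^{(2)}}^\ast v = 0$, the identity
\[
\int_\Omega\bigl((A^{(1)}-A^{(2)})\cdot D u_1 + (q^{(1)}-q^{(2)})u_1\bigr)\overline{v}\,dx = (\text{boundary terms on }\p\Omega\setminus\tilde F).
\]
Taking $u_1$ and $v$ to be CGO solutions with opposite phases so that the exponential weights cancel in the interior but not on $\p\Omega\setminus\tilde F$, the Carleman boundary term estimate forces the boundary contribution to tend to zero as $h\to 0$. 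Inserting the expansions and sending $h\to 0$ one obtains, at leading order, vanishing of certain Fourier--like transforms of $A^{(1)}-A^{(2)}$ along two-planes, and at the next order of $q^{(1)}-q^{(2)}$; varying the parameters $(x_0,\omega)$ and using analytic continuation in the spectral parameter, this shows $A^{(1)}=A^{(2)}$ and then $q^{(1)}=q^{(2)}$ throughout $\Omega$. The delicate point, and the principal obstacle, is the third step: one must carry the CGO ansatz to sufficiently high order that the leading transport equation determines $A$ \emph{without} the gauge indeterminacy familiar from the Laplacian case, and one must simultaneously keep the boundary terms on $\p\Omega\setminus\tilde F$ small enough that the Carleman estimate closes. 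Controlling these boundary terms for a fourth order operator, where four traces rather than two appear, requires a sharp boundary Carleman estimate with the exact power of $h$ pairing correctly with the normal derivatives of the CGO solutions.
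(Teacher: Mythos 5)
Your framework coincides with the paper's: the Carleman estimate for $\Delta^2$ obtained by iterating the Kenig--Sj\"ostrand--Uhlmann estimate for $-h^2\Delta$ with the weight $\varphi=\log|x-x_0|$, absorption of $A\cdot D+q$ using the gain in $h$, a Hahn--Banach solvability statement, CGO solutions $e^{(\pm\varphi+i\psi)/h}(a_0+ha_1+r)$ with $\|r\|_{H^4_{\mathrm{scl}}}=\mathcal{O}(h^2)$, the Green-formula identity with the boundary terms confined to $\p\Omega\setminus\tilde F$ and killed by the boundary Carleman estimate, and recovery of $A$ first and $q$ second. That part is sound.

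The genuine gap is the step you yourself single out as ``the delicate point'': how the gauge invariance is actually broken. Your proposed mechanism --- that the correction $a_1$, ``encoding a first order contribution from $A$'', kills the gauge ambiguity --- points in the wrong direction. After multiplying the integral identity by $h$ and letting $h\to0$, the term involving $a_1$ disappears entirely; $a_1$ serves only to make the remainder $\mathcal{O}(h^2)$. What survives is
\[
\int_\Omega(A^{(2)}-A^{(1)})\cdot(\nabla\varphi+i\nabla\psi)\,a_0^{(2)}\overline{a_0^{(1)}}\,dx=0,
\]
which is formally identical to the magnetic Schr\"odinger case and by itself yields only $d(A^{(2)}-A^{(1)})=0$ and then $A^{(2)}-A^{(1)}=\nabla\Psi$ with $\Psi$ constant on $\p\Omega$ (via the microlocal Helgason/Holmgren arguments of \cite{DKSU_2007} --- not ``analytic continuation in the spectral parameter''). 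The extra leverage for $\Delta^2$ comes from the amplitude $a_0$, not from $a_1$: the first transport equation is the \emph{iterated} equation $T^2a_0=0$ with $T=(\nabla\varphi+i\nabla\psi)\cdot\nabla+\tfrac12(\Delta\varphi+i\Delta\psi)$, so the admissible amplitudes include $a_0^{(2)}$ with $Ta_0^{(2)}=e^{\Phi_2}g\neq0$. Taking $\Psi|_{\p\Omega}=0$ and integrating by parts in the displayed identity gives
\[
\int_\Omega\Psi\Bigl((Ta_0^{(2)})\overline{a_0^{(1)}}+a_0^{(2)}\,T\overline{a_0^{(1)}}\Bigr)dx=0,
\]
and choosing $T\overline{a_0^{(1)}}=0$ while $Ta_0^{(2)}=e^{\Phi_2}g$ reduces this to $\int_\Omega\Psi e^{\Phi_2+\overline{\Phi_1}}g\,dx=0$, whence $\Psi\equiv0$. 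Without this observation the argument stalls exactly where it does for the Laplacian, and the theorem as stated (full recovery of $A$, not $A$ modulo a gauge) is not reached.
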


Following \cite{KenSjUhl2007}, we say that an open set $\Omega\subset \R^n$ with smooth boundary is strongly star shaped with respect to a point $x_1\in \p \Omega$, if every line through $x_1$ which is not contained in the tangent hyperplane cuts the boundary $\p \Omega$ at precisely two distinct points $x_1$ and $x_2$, with transversal intersection at $x_2$.  We have the following corollary of Theorem \ref{thm_main}.

\begin{cor}
\label{cor_1}
Let $\Omega\subset \R^n$, $n\ge 3$,  be  a bounded  simply connected domain with $C^\infty$ connected boundary, and let $x_1\in\p \Omega$ be such that the tangent hyperplane of $\p \Omega$ at $x_1$ only intersects $\p \Omega$ at $x_1$ and $\Omega$ is strongly star shaped with respect to $x_1$. 
Furthermore,  let $A^{(1)}, A^{(2)}\in  C^4(\overline{\Omega},\C^n)$ and $q^{(1)},q^{(2)}\in L^\infty(\Omega,\C)$, be such that the assumption (A) is satisfied for both operators.  
If for a neighborhood $\tilde F$ of $x_1$ in $\p \Omega$, we have
\[
\mathcal{N}_{A^{(1)},q^{(1)}}(f_0,f_1)|_{\tilde F}=\mathcal{N}_{A^{(2)},q^{(2)}}(f_0,f_1)|_{\tilde F}\quad \textrm{for all  } (f_0,f_1)\in H^{7/2}(\p \Omega)\times H^{3/2}(\p \Omega), 
\]
 then $A^{(1)}=A^{(2)}$ and $q^{(1)}=q^{(2)}$ in $\Omega$.  
\end{cor}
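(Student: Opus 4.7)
The plan is to reduce Corollary \ref{cor_1} to Theorem \ref{thm_main} by manufacturing, from the given neighborhood $\tilde F$ of $x_1$, a point $x_0\in\R^n\setminus\overline{\textrm{ch}(\Omega)}$ whose front face $F(x_0)$ from \eqref{eq_intr_F(x_0)} is contained in $\tilde F$. Since $\tilde F$ will then be an open neighborhood of $F(x_0)$ in $\p\Omega$, Theorem \ref{thm_main} immediately yields $A^{(1)}=A^{(2)}$ and $q^{(1)}=q^{(2)}$.

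The natural choice is $x_0=x_1+t\,\nu(x_1)$ with $t>0$ small. The tangent hyperplane $H$ to $\p\Omega$ at $x_1$ meets $\p\Omega$ only at $x_1$ by hypothesis, and $\p\Omega\setminus\{x_1\}$ is connected because $\p\Omega$ is a connected closed manifold of dimension $n-1\ge 2$; consequently $\p\Omega\setminus\{x_1\}$ lies in a single open half-space of $\R^n\setminus H$. Strong star-shapedness propagates this to $\Omega$: every $y\in\Omega$ lies on the open segment $(x_1,x_2)$ for some $x_2\in\p\Omega\setminus\{x_1\}$, and since $(x_2-x_1)\cdot\nu(x_1)<0$, so is $(y-x_1)\cdot\nu(x_1)$. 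Hence $\overline{\textrm{ch}(\Omega)}\subset\{y:(y-x_1)\cdot\nu(x_1)\le 0\}$, while $(x_0-x_1)\cdot\nu(x_1)=t>0$, so $x_0$ is admissible.

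The key geometric observation is then $(x-x_1)\cdot\nu(x)>0$ for every $x\in\p\Omega\setminus\{x_1\}$. The line through $x_1$ and $x$ is not contained in $H$ (otherwise $x\in H\cap\p\Omega=\{x_1\}$), so by strong star-shapedness it meets $\p\Omega$ only at $x_1$ and $x$, transversally at $x$. Parametrizing the line as $x_1+r(x-x_1)$, it enters $\Omega$ immediately beyond $r=0$ (the direction $x-x_1$ points into the half-space containing $\Omega$), cannot leave $\Omega$ before $r=1$ (there is no other crossing of $\p\Omega$), and exits $\Omega$ at $x$ in the direction $x-x_1$; transversality at $x$ then forces $(x-x_1)\cdot\nu(x)>0$.

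A routine compactness argument finishes matters. The set $\p\Omega\setminus\tilde F$ is compact and contained in $\p\Omega\setminus\{x_1\}$, so the continuous positive function $x\mapsto (x-x_1)\cdot\nu(x)$ has a lower bound $\delta>0$ there. For $x\in\p\Omega\setminus\tilde F$ and any $0<t<\delta$,
\[
(x-x_0)\cdot\nu(x)=(x-x_1)\cdot\nu(x)-t\,\nu(x_1)\cdot\nu(x)\ge\delta-t>0,
\]
so $x\notin F(x_0)$. Hence $F(x_0)\subset\tilde F$ for such $t$, and Theorem \ref{thm_main} applies. The principal obstacle is the geometric lemma $(x-x_1)\cdot\nu(x)>0$ on $\p\Omega\setminus\{x_1\}$, together with showing that $\overline{\textrm{ch}(\Omega)}$ lies on one side of $H$; once these are justified from the hypotheses $H\cap\p\Omega=\{x_1\}$ and the strong star-shapedness of $\Omega$ with respect to $x_1$, the reduction is essentially mechanical.
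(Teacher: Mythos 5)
Your reduction is correct and is exactly the standard argument that the paper silently borrows from Kenig--Sj\"ostrand--Uhlmann: place $x_0=x_1+t\,\nu(x_1)$ outside $\overline{\textrm{ch}(\Omega)}$, prove $(x-x_1)\cdot\nu(x)>0$ on $\p\Omega\setminus\{x_1\}$ from strong star-shapedness and transversality, and shrink $t$ by compactness so that $F(x_0)\subset\tilde F$, whence Theorem \ref{thm_main} applies. The only step you leave implicit is why $\p\Omega\setminus\{x_1\}$ lies on the side $(z-x_1)\cdot\nu(x_1)<0$ of the tangent hyperplane $H$ (you use this sign when asserting $(x_2-x_1)\cdot\nu(x_1)<0$); it follows because $\Omega$ is connected, disjoint from $H$ (a bounded nonempty open subset of $H\cong\R^{n-1}$, $n-1\ge 2$, cannot have boundary reduced to the single point $x_1$), and contains $x_1-s\,\nu(x_1)$ for small $s>0$.
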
 

Notice that if $\Omega$ is strictly convex, then  the assumptions on $\Omega$ of Corollary \ref{cor_1} are satisfied for any $x_1\in\p \Omega$, and therefore,  measuring the Dirichlet--to--Neumann map on an arbitrarily small open subset of the boundary determines the first order perturbation uniquely.

Let $\tilde F$ be an open neighborhood of the front face of $\p \Omega$ with respect to $x_0$, defined in 
\eqref{eq_intr_F(x_0)}. 
Associated to $\tilde F$, consider the set of the Cauchy data for the first order perturbation of the biharmonic operator, which is based on the Dirichlet boundary conditions   $(u|_{\p \Omega}, \p_\nu u|_{\p \Omega})$, 
\[
\tilde{\mathcal{C}}_{A^{(j)},q^{(j)}}^{\tilde F}=\{(u|_{\p \Omega}, \p_\nu u|_{\p \Omega},  \p_\nu^2 u|_{\p \Omega}  , \p_\nu^3 u|_{\tilde F} ): u\in H^{4}(\Omega), \mathcal{L}_{A^{(j)},q^{(j)}}u=0\ \textrm{in }\Omega\},
\]
$j=1,2$.  Notice that the Dirichlet boundary conditions correspond to the clamped plate equation.   
We have the following partial data result.  

\begin{cor}
\label{cor_cor}
Let $\Omega\subset \R^n$, $n\ge 3$,  be  a bounded  simply connected domain with $C^\infty$ connected boundary,  and let $A^{(1)}, A^{(2)}\in  C^4(\overline{\Omega},\C^n)$ and $q^{(1)},q^{(2)}\in L^\infty(\Omega,\C)$. 
If $\tilde{\mathcal{C}}_{A^{(1)},q^{(1)}}^{\tilde F}=\tilde{\mathcal{C}}_{A^{(2)},q^{(2)}}^{\tilde F}$, 
 then $A^{(1)}=A^{(2)}$ and $q^{(1)}=q^{(2)}$ in $\Omega$.  
\end{cor}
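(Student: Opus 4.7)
The plan is to reduce the corollary to the core integral identity that drives the proof of Theorem \ref{thm_main}, and to exploit the fact that, unlike the Navier Dirichlet--to--Neumann map used in the theorem, the Cauchy data set $\tilde{\mathcal{C}}_{A,q}^{\tilde F}$ is defined unconditionally. Hence assumption (A) is avoided. The hypothesis $\tilde{\mathcal{C}}_{A^{(1)},q^{(1)}}^{\tilde F}=\tilde{\mathcal{C}}_{A^{(2)},q^{(2)}}^{\tilde F}$ supplies, for every $u^{(1)}\in H^4(\Omega)$ solving $\mathcal{L}_{A^{(1)},q^{(1)}}u^{(1)}=0$, a $u^{(2)}\in H^4(\Omega)$ solving $\mathcal{L}_{A^{(2)},q^{(2)}}u^{(2)}=0$ with $u^{(1)}=u^{(2)}$, $\partial_\nu u^{(1)}=\partial_\nu u^{(2)}$, $\partial_\nu^2 u^{(1)}=\partial_\nu^2 u^{(2)}$ on all of $\partial\Omega$ and $\partial_\nu^3 u^{(1)}=\partial_\nu^3 u^{(2)}$ on $\tilde F$.

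Setting $w=u^{(1)}-u^{(2)}$, one computes $\mathcal{L}_{A^{(2)},q^{(2)}}w=(A^{(2)}-A^{(1)})\cdot Du^{(1)}+(q^{(2)}-q^{(1)})u^{(1)}$. I would pair this with an arbitrary solution $v\in H^4(\Omega)$ of the formal adjoint equation $\mathcal{L}^*_{A^{(2)},q^{(2)}}v=0$, and apply Green's second identity for $\Delta^2$ together with integration by parts for the first-order term. The contribution of $A^{(2)}\cdot D$ produces only a boundary term proportional to $\int_{\partial\Omega}(A^{(2)}\cdot\nu)w\bar v\,dS$, which vanishes because $w|_{\partial\Omega}=0$. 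For the biharmonic piece, decomposing $\Delta$ on $\partial\Omega$ via the normal--tangential splitting $\Delta=\partial_\nu^2+H\partial_\nu+\Delta_{\partial\Omega}$ and using $w=\partial_\nu w=\partial_\nu^2 w=0$ on $\partial\Omega$, Green's identity collapses to $\int_{\partial\Omega}(\partial_\nu^3 w)\bar v\,dS$. Finally, $\partial_\nu^3 w|_{\tilde F}=0$ yields
\[
\int_\Omega\bigl[(A^{(1)}-A^{(2)})\cdot Du^{(1)}+(q^{(1)}-q^{(2)})u^{(1)}\bigr]\bar v\,dx=\int_{\partial\Omega\setminus\tilde F}(\partial_\nu^3 w)\bar v\,dS.
\]

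This is structurally the same identity that the proof of Theorem \ref{thm_main} extracts from the agreement of the partial Navier Dirichlet--to--Neumann maps, with a boundary remainder supported only on $\partial\Omega\setminus\tilde F$. From here I would invoke the complex geometric optics ansatz used in the proof of the main theorem, for $\mathcal{L}_{A^{(1)},q^{(1)}}$ and for $\mathcal{L}^*_{A^{(2)},q^{(2)}}$, substituting them for $u^{(1)}$ and $v$, and apply the boundary Carleman estimate (with weight $\varphi$ chosen so that $\tilde F$ contains $\{\partial_\nu\varphi\le 0\}$) to conclude that the boundary remainder vanishes in the semiclassical limit $h\to 0^+$. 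The resulting orthogonality relation is the one obtained in the main theorem, and it forces $A^{(1)}=A^{(2)}$ and $q^{(1)}=q^{(2)}$. Both the CGO construction and the Carleman estimate are independent of assumption (A), so the conclusion follows without it.

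The main obstacle is the boundary-term bookkeeping in the reduction above: one must check carefully that the Dirichlet-type vanishing of $w,\partial_\nu w,\partial_\nu^2 w$ on $\partial\Omega$ eliminates \emph{every} term in Green's identity for $\Delta^2$ except $(\partial_\nu^3 w)\bar v$, including the lower-order tangential pieces produced when $\partial_\nu\Delta w$ is expanded on the boundary, and that integration by parts against $A^{(2)}\cdot D$ produces no surviving boundary contribution beyond the one already accounted for. Once this is verified, the CGO/Carleman core of the proof of Theorem \ref{thm_main} transfers directly.
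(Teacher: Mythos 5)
Your argument is correct. The paper disposes of this corollary in one line: it observes that, by the expression of the Laplacian in boundary normal coordinates ($\Delta=\partial_\nu^2+H\partial_\nu+\Delta_{\partial\Omega}$ near $\partial\Omega$), the traces $(u,\partial_\nu u,\partial_\nu^2 u)|_{\partial\Omega}$ determine $(\Delta u)|_{\partial\Omega}$, and together with $\partial_\nu^3u|_{\tilde F}$ they determine $\partial_\nu(\Delta u)|_{\tilde F}$; hence equality of the sets $\tilde{\mathcal{C}}^{\tilde F}$ yields exactly the matching of Navier-type Cauchy data on which the proof of Theorem \ref{thm_main} runs, and one cites that proof. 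You instead re-derive the integral identity directly from the Dirichlet-type data. Your boundary bookkeeping checks out: $w=\partial_\nu w=\partial_\nu^2w=0$ on $\partial\Omega$ forces $\Delta w|_{\partial\Omega}=0$ and $\partial_\nu(\Delta w)|_{\partial\Omega}=\partial_\nu^3w|_{\partial\Omega}$, since every other term in the normal-coordinate expansion of $\partial_\nu\Delta w$ is a tangential derivative of an identically vanishing trace; so the Green formula \eqref{eq_Green_form} leaves the single remainder $\int_{\partial\Omega\setminus\tilde F}\partial_\nu(\Delta w)\overline{v}\,dS$ (one term fewer than in \eqref{eq_identity_main}), and the boundary Carleman estimate \eqref{eq_boundary_Carleman} is applicable to $w$ because $w|_{\partial\Omega}=(\Delta w)|_{\partial\Omega}=0$, with the $\partial\Omega_-$ terms vanishing since $\partial\Omega_-=F(x_0)\subset\tilde F$. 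The two routes rest on the same computation; what yours buys is that it makes explicit why assumption (A) can be dropped in the corollary --- the Cauchy-data formulation never requires solving the Navier boundary value problem --- a point that the paper's bare appeal to Theorem \ref{thm_main}, whose statement does assume (A), leaves implicit.
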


Corollary \ref{cor_cor} follows from Theorem \ref{thm_main} and the explicit description for the Laplacian in the boundary normal coordinates, see \cite{LeeUhl89}.

Finally, let us mention that the study of inverse boundary value problems has a long and distinguished tradition, in particular, in the context of electrical impedance tomography, see    
\cite{AP, ALP,  N2, S} for the 
two dimensional case,
 and \cite{Cal, GLU1, N1, PPU, SU} for the case of higher dimensions,
 as well as in  inverse boundary value problems and 
inverse scattering problems for the Schr\"odinger equation \cite{Bukh_2008, EskRal_1995, GLU1, Ima_Uhl_Yam_2010,  NakSunUlm_1995,  No,  Salo_2006, Sun_1993}, 
and in  elliptic inverse problems on Riemannian manifolds,
\cite{GT1, GT2, KKL, LTU, LU, LeeUhl89}. 
For sufficiently non-regular coefficients, the inverse problems are
not uniquely solvable, see \cite{GLU1, GLU2}, even when the measurements
are performed on the whole boundary. These 
counterexamples are  closely related to the so-called invisibility cloaking, see e.g.\ \cite{GKLU1, GKLUbull, KSVW, Le, PSS1}.

The paper is organized as follows.  In Section 2 we construct complex geometric optics solutions for the perturbed biharmonic operator, using the 
 methods of Carleman estimates with limiting Carleman weights.  Section 3 is devoted to Carleman estimates with boundary terms for the perturbed biharmonic operator. These estimates are crucial  when  estimating away the boundary terms  in the derivation of the main integral identity, which is carried out in Section 4.  The final Section 5 is concerned with the determination of the first order perturbation, relying upon the main integral identity.  We notice that the unique identifiability of the vector field part of the perturbation becomes possible thanks to special properties of the amplitudes in the complex geometric optics solutions.

\section{Construction of complex geometric optics solutions} 

\label{sec_CGO}

Let $\Omega\subset\R^n$, $n\ge 3$, be a bounded domain with $C^\infty$-boundary.  Following \cite{DKSU_2007, KenSjUhl2007}, we shall use the method of Carleman estimates to construct complex geometric optics solutions for the equation $\mathcal{L}_{A,q}u=0$ in $\Omega$,  with $A\in  C^4(\overline{\Omega},\C^n)$ and $q\in L^\infty(\Omega,\C)$. 

First we shall derive a Carleman estimate for the semiclassical biharmonic operator $(-h^2\Delta)^2$, where $h>0$ is a small parameter,  by iterating the corresponding Carleman estimate for the semiclassical Laplacian $-h^2\Delta$, which we now proceed to recall following  \cite{KenSjUhl2007, Salo_Tzou_2009}. 
Let $\tilde \Omega$ be an open set in $\R^n$ such that $ \Omega\subset\subset\tilde \Omega$ and 
$\varphi\in C^\infty(\tilde \Omega,\R)$.  Consider the conjugated operator 
\[
P_\varphi=e^{\frac{\varphi}{h}}(-h^2\Delta) e^{-\frac{\varphi}{h}}
\]
and its semiclassical principal symbol
\begin{equation}
\label{eq_p_varphi}
p_\varphi(x,\xi)=\xi^2+2i\nabla \varphi\cdot \xi-|\nabla \varphi|^2, \quad x\in \tilde {\Omega},\quad  \xi\in \R^n. 
\end{equation}
Following \cite{KenSjUhl2007}, we say that $\varphi$ is a limiting Carleman weight for $-h^2\Delta$ in $\tilde \Omega$, if $\nabla \varphi\ne 0$ in $\tilde \Omega$ and the Poisson bracket of $\Re p_\varphi$ and $\Im p_\varphi$ satisfies, 
\[
\{\Re p_\varphi,\Im p_\varphi\}(x,\xi)=0 \quad \textrm{when}\quad p_\varphi(x,\xi)=0, \quad (x,\xi)\in \tilde{\Omega}\times \R^n. 
\]
Examples are  linear weights $\varphi(x)=\alpha\cdot x$, $\alpha\in \R^n$, $|\alpha|=1$, and logarithmic weights $\varphi(x)=\log|x-x_0|$,  with $x_0\not\in \tilde \Omega$.  In this paper we shall use the logarithmic weights. 

In what follows we shall equip  the standard Sobolev space $H^s(\R^n)$, $s\in\R$, with the semiclassical norm $\|u\|_{H^s_{\textrm{scl}}}=\|\langle hD \rangle^s u\|_{L^2}$. Here $\langle\xi  \rangle=(1+|\xi|^2)^{1/2}$. 
We shall need the following result, obtained in \cite{Salo_Tzou_2009}, generalizing the Carleman estimate with a gain of one derivative, proven in  \cite{KenSjUhl2007}.

\begin{prop}
Let $\varphi$ be a limiting Carleman weight for the semiclassical Laplacian on $\tilde \Omega$. Then the Carleman estimate 
\begin{equation}
\label{eq_Carleman_lap}
\|e^{\frac{\varphi}{h}}(-h^2\Delta)e^{-\frac{\varphi}{h}}u\|_{H^s_{\emph{scl}}}\ge \frac{h}{C_{s,\Omega}}\|u\|_{H^{s+2}_{\emph{scl}}},\quad C_{s,\Omega}>0,
\end{equation}
holds for all $u\in C^\infty_0(\Omega)$, $s\in\R$ and  all $h>0$ small enough.  
\end{prop}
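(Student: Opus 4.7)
The plan is to derive the shifted Sobolev Carleman estimate \eqref{eq_Carleman_lap} from the classical $L^2$ Carleman estimate
$$h\|u\|_{L^2(\Omega)} \le C\|P_\varphi u\|_{L^2(\Omega)}, \quad u\in C_0^\infty(\Omega),$$
which was established in \cite{KenSjUhl2007} precisely under the limiting Carleman weight hypothesis. The passage from $L^2$ to the Sobolev scale $H^s_{\textrm{scl}}$ is obtained by combining that baseline estimate with semiclassical elliptic regularity (to gain derivatives on the left-hand side) and a semiclassical pseudodifferential conjugation (to slide along the Sobolev scale).

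First I would handle the case $s=0$. A direct computation gives
$$P_\varphi = (hD)^2 + 2i\nabla\varphi\cdot(hD) + h\Delta\varphi - |\nabla\varphi|^2,$$
with semiclassical principal symbol $p_\varphi$ as in \eqref{eq_p_varphi}. Because $|\nabla\varphi|$ is bounded on $\overline{\Omega}$, the characteristic set $\{p_\varphi=0\}$ is compact in $\xi$, so $P_\varphi$ is semiclassically elliptic at infinity in the fibre variable. A standard parametrix construction then yields the elliptic bound
$$\|u\|_{H^2_{\textrm{scl}}} \le C\bigl(\|P_\varphi u\|_{L^2} + \|u\|_{L^2}\bigr), \quad u\in C_0^\infty(\Omega),$$
and substituting the KSU estimate $\|u\|_{L^2}\le (C/h)\|P_\varphi u\|_{L^2}$ into the lower-order term and absorbing it gives $h\|u\|_{H^2_{\textrm{scl}}} \le C\|P_\varphi u\|_{L^2}$, i.e.\ the claim at $s=0$.

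For general $s\in\R$, I would conjugate by the semiclassical Bessel potential $\Lambda^s := \langle hD\rangle^s$, a pseudodifferential operator of order $s$. From $\Lambda^s P_\varphi u = P_\varphi(\Lambda^s u) + [\Lambda^s,P_\varphi]u$ and the fact that the semiclassical commutator $[\Lambda^s,P_\varphi]$ is $h$ times an operator of order $s+1$, one gets
$$\|P_\varphi u\|_{H^s_{\textrm{scl}}} \ge \|P_\varphi(\Lambda^s u)\|_{L^2} - Ch\|u\|_{H^{s+1}_{\textrm{scl}}}.$$
Since $\Lambda^s u$ is no longer compactly supported, I would apply the $s=0$ estimate to $\chi\Lambda^s u$, where $\chi\in C_0^\infty(\tilde\Omega)$ equals $1$ in a neighbourhood of $\overline{\Omega}$, and control the remainder $\chi P_\varphi((1-\chi)\Lambda^s u)$ on $\Omega$ using the pseudolocality of $\Lambda^s$: it contributes only $O(h^\infty)\|u\|_{L^2}$. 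Finally, the commutator remainder $Ch\|u\|_{H^{s+1}_{\textrm{scl}}}$ is absorbed into the left-hand side $h\|u\|_{H^{s+2}_{\textrm{scl}}}$ by a standard semiclassical interpolation between $H^{s+2}_{\textrm{scl}}$ and $L^2$, for $h$ small enough.

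The main technical obstacle is this conjugation step for general (and in particular negative) $s$, since $\Lambda^s$ does not preserve compact support; one must rely on the pseudolocality of $\Lambda^s$ together with the extra factor of $h$ appearing in every commutator of semiclassical pseudodifferential operators to make the error terms harmless. This gain-of-$h$ mechanism is exactly what allows the classical $L^2$ Carleman estimate to be promoted to the Sobolev-scale version of Salo and Tzou stated above.
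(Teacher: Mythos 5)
First, a remark on context: the paper itself gives no proof of this proposition; it is quoted from \cite{Salo_Tzou_2009}, so I am comparing your argument against the known proof there. Your treatment of the case $s=0$ is correct: fibre-ellipticity of $P_\varphi$ for large $\xi$ gives $\|u\|_{H^2_{\textrm{scl}}}\le C(\|P_\varphi u\|_{L^2}+\|u\|_{L^2})$, and feeding in the $L^2$ estimate of \cite{KenSjUhl2007} yields the claim. The cutoff/pseudolocality device for handling the non-compact support of $\Lambda^s u$ is also the standard and adequate one. The gap is in the final absorption step for general $s$. After commuting $\Lambda^s$ through $P_\varphi$ you are left with an error $C_1h\|u\|_{H^{s+1}_{\textrm{scl}}}$ (the sharp computation in fact gives $C_1h\|u\|_{H^{s}_{\textrm{scl}}}$, since the principal part $\xi^2$ of $p_\varphi$ commutes with $\langle\xi\rangle^s$), to be absorbed into the main term $\frac{h}{C_0}\|u\|_{H^{s+2}_{\textrm{scl}}}$. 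Both terms carry exactly one power of $h$, and in the semiclassical calculus one Sobolev index less does \emph{not} come with a small constant: for $u$ whose semiclassical Fourier transform is supported in $\{|\xi|\le 1\}$, all the norms $\|u\|_{H^{t}_{\textrm{scl}}}$ are comparable uniformly in $h$, so $C_1h\|u\|_{H^{s+1}_{\textrm{scl}}}$ is genuinely comparable to the main term whenever $C_1C_0\ge 1$. Interpolation only gives $\|u\|_{H^{s+1}_{\textrm{scl}}}\le\delta\|u\|_{H^{s+2}_{\textrm{scl}}}+C_\delta\|u\|_{L^2}$, and the leftover $C_\delta h\|u\|_{L^2}\le C_\delta C_0\|P_\varphi u\|_{L^2}$ can be fed back into the right-hand side only when $s\ge 0$, where $\|P_\varphi u\|_{L^2}\le\|P_\varphi u\|_{H^s_{\textrm{scl}}}$. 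For $s<0$ --- precisely the range $-4\le s\le 0$ that the paper needs --- this inequality goes the wrong way and the argument does not close; taking $h$ small does not help, since no term gains a power of $h$ over another.

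The standard repair, and the one used in \cite{Salo_Tzou_2009} following \cite{KenSjUhl2007}, is to run the whole argument with the convexified weight $\varphi_\epsilon=\varphi+\frac{h}{2\epsilon}\varphi^2$. The corresponding $L^2$ Carleman estimate reads $\frac{h}{C\sqrt{\epsilon}}\|u\|_{L^2}\le\|P_{\varphi_\epsilon}u\|_{L^2}$ with $C$ independent of $\epsilon$ for $0<h\ll\epsilon\ll 1$, so after conjugation by $\Lambda^s$ the main term becomes $\frac{h}{C\sqrt{\epsilon}}\|u\|_{H^{s+2}_{\textrm{scl}}}$ while the commutator error remains $C_1h\|u\|_{H^{s}_{\textrm{scl}}}$ with $C_1$ independent of $\epsilon$; choosing $\epsilon$ small but fixed then absorbs the error, and one returns to the weight $\varphi$ at the end because $e^{\pm\varphi^2/(2\epsilon)}$ and its derivatives are bounded uniformly in $h$. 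Without this (or some equivalent source of a small constant in front of the commutator term), your argument establishes the proposition only for $s\ge 0$.
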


Iterating the Carleman estimate \eqref{eq_Carleman_lap} two times, we get the following Carleman estimate for the biharmonic operator,
\begin{equation}
\label{eq_Carleman_poly_1}
\|e^{\frac{\varphi}{h}}(h^2\Delta)^2 e^{-\frac{\varphi}{h}}u\|_{H^s_{\textrm{scl}}}\ge \frac{h^2}{C_{s,\Omega}}\|u\|_{H^{s+4}_{\textrm{scl}}},
\end{equation}
for all $u\in C^\infty_0(\Omega)$, $s\in\R$ and $h>0$ small. 

Let $A\in W^{1,\infty}(\Omega,\C^n)$ and $q\in L^\infty(\Omega,\C)$. Then to add the perturbation $h^{4}q$ to the estimate \eqref{eq_Carleman_poly_1}, we assume that 
$-4\le s\le 0$ and use that 
\[
\|qu\|_{H^s_{\textrm{scl}}}\le \|qu\|_{L^2}\le \|q\|_{L^\infty}\|u\|_{L^2}\le  \|q\|_{L^\infty}\|u\|_{H^{s+4}_{\textrm{scl}}}.
\] 
To add the perturbation 
\[
h^{3}e^{\frac{\varphi}{h}}(A\cdot hD) e^{-\frac{\varphi}{h}}=h^{3}(A\cdot hD+iA\cdot\nabla \varphi) 
\]
 to the estimate \eqref{eq_Carleman_poly_1}, assuming that 
$-4\le s\le 0$, we need the following estimates 
\[
\|(A\cdot\nabla \varphi)u\|_{H^s_{\textrm{scl}}}\le \|A\cdot\nabla \varphi\|_{L^\infty}\|u\|_{H^{s+4}_{\textrm{scl}}},
\]
\begin{align*}
\|A\cdot hD u\|_{H^s_{\textrm{scl}}}&\le \sum_{j=1}^n\|hD_j(A_j u)\|_{H^s_{\textrm{scl}}}+\mathcal{O}(h)\|(\div A)u\|_{H^s_{\textrm{scl}}}\\
&\le \mathcal{O}(1)\sum_{j=1}^n\|A_j u\|_{H^{s+1}_{\textrm{scl}}}+\mathcal{O}(h)\|u\|_{H^{s+4}_{\textrm{scl}}}\le \mathcal{O}(1)\|u\|_{H^{s+4}_{\textrm{scl}}}.
\end{align*}
When obtaining the last inequality, we notice that the operator, given by multiplication by $A_j$, maps $H^{s+4}_{\textrm{scl}}\to H^{s+1}_{\textrm{scl}}$. To see this by complex interpolation it suffices to consider the cases $s=0$ and $s=-4$. 

Let 
\[
\mathcal{L}_\varphi=e^{\frac{\varphi}{h}} h^{4}\mathcal{L}_{A,q} e^{-\frac{\varphi}{h}}.
\]
Thus, we obtain the following Carleman estimate for a first order perturbation of the biharmonic operator.

\begin{prop} 

Let $A\in W^{1,\infty}(\Omega,\C^n)$,  $q\in L^\infty(\Omega,\C)$, and $\varphi$ be a limiting Carleman weight for the semiclassical Laplacian on $\tilde \Omega$.  If $-4\le s\le 0$, then for $h>0$ small enough,  one 
has 
\begin{equation}
\label{eq_Carleman_poly_perturbation}
\|\mathcal{L}_\varphi u\|_{H^s_{\emph{scl}}}\ge \frac{h^2}{C_{s,\Omega,A,q}}\|u\|_{H^{s+4}_{\emph{scl}}},
\end{equation}
for all $u\in C^\infty_0(\Omega)$. 
\end{prop}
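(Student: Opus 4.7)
The plan is to take the unperturbed Carleman estimate \eqref{eq_Carleman_poly_1} as a base and to treat the first order perturbation as a remainder that is lower order in $h$ than the $h^2$ gain of \eqref{eq_Carleman_poly_1}. A direct conjugation, based on the identity $e^{\varphi/h}(hD_j)e^{-\varphi/h}=hD_j+i\partial_j\varphi$, yields
\[
\mathcal{L}_\varphi = e^{\varphi/h}(h^2\Delta)^2 e^{-\varphi/h} + h^3\bigl(A\cdot hD + iA\cdot\nabla\varphi\bigr) + h^4 q.
\]
By the triangle inequality, it suffices to bound each of the three perturbation pieces in $H^s_{\textrm{scl}}$ by $\mathcal{O}(h^3)\|u\|_{H^{s+4}_{\textrm{scl}}}$; taking $h$ small enough that this remainder is dominated by $\frac{h^2}{2C_{s,\Omega}}\|u\|_{H^{s+4}_{\textrm{scl}}}$ from \eqref{eq_Carleman_poly_1} will then produce \eqref{eq_Carleman_poly_perturbation}.

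For the two zeroth order multipliers $q$ and $A\cdot\nabla\varphi$, the hypothesis $-4\le s\le 0$ gives $s\le 0\le s+4$, so $\|v\|_{H^s_{\textrm{scl}}}\le\|v\|_{L^2}\le\|v\|_{H^{s+4}_{\textrm{scl}}}$, and the $L^\infty$ bounds on $q$ and $A\cdot\nabla\varphi$ directly give the required estimate with the small prefactors $h^4$ and $h^3$. For the genuine first order piece $h^3 A\cdot hD$, I would commute the derivative past $A$ via $A_j\,hD_j u = hD_j(A_j u)-h(D_j A_j) u$, combine $\|hD_j(A_j u)\|_{H^s_{\textrm{scl}}}\le\|A_j u\|_{H^{s+1}_{\textrm{scl}}}$ with the trivial bound on the lower order commutator term, and reduce the whole estimate to the claim that multiplication by $A_j\in W^{1,\infty}$ maps $H^{s+4}_{\textrm{scl}}\to H^{s+1}_{\textrm{scl}}$ with a constant that is uniform in $h\in(0,1]$.

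The main obstacle is this semiclassical multiplication property. I would establish it by complex interpolation between the two endpoints of the allowed range: at $s=0$ it reduces to $W^{1,\infty}\cdot H^4_{\textrm{scl}}\subset H^1_{\textrm{scl}}$, an elementary product rule computation with uniform constants, while at $s=-4$ it reduces to $L^\infty\cdot L^2\subset L^2\hookrightarrow H^{-3}_{\textrm{scl}}$, which is immediate. Since the semiclassical multipliers $\langle hD\rangle^s$ form a holomorphic family in $s$, Stein's interpolation theorem (applied with $h$ fixed) yields the intermediate values with $h$-independent constants. Collecting the three remainder bounds, invoking \eqref{eq_Carleman_poly_1}, and absorbing the $\mathcal{O}(h^3)$ term for $h$ sufficiently small finishes the proof of \eqref{eq_Carleman_poly_perturbation}.
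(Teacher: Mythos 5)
Your proposal is correct and follows essentially the same route as the paper: perturb the iterated Carleman estimate \eqref{eq_Carleman_poly_1}, bound the zeroth order terms using $-4\le s\le 0$, commute $hD_j$ past $A_j$, and reduce to the multiplication property $A_j:H^{s+4}_{\textrm{scl}}\to H^{s+1}_{\textrm{scl}}$ obtained by interpolating between the endpoints $s=0$ and $s=-4$. Your explicit appeal to Stein interpolation for the family $\langle hD\rangle^s$ merely fills in a detail the paper leaves implicit.
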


The formal $L^2$-adjoint of $\mathcal{L}_\varphi$ is given by
\[
\mathcal{L}_\varphi^* =e^{-\frac{\varphi}{h}} (h^{4}\mathcal{L}_{\bar{A},i^{-1}\nabla\cdot \bar{A}+\bar q}) e^{\frac{\varphi}{h}}. 
\]
Notice that  if $\varphi$ is a limiting Carleman weight, then so is $-\varphi$. This implies that the Carleman estimate \eqref{eq_Carleman_poly_perturbation} holds also for the formal adjoint $\mathcal{L}_\varphi^*$.

To construct complex geometric optics solution we need the following solvability result, similar to \cite{DKSU_2007}.  The proof is essentially well-known, and is included here for the convenience of the reader.

\begin{prop}
\label{prop_Hahn-Banach}
Let $A\in W^{1,\infty}(\Omega,\C^n)$,  $q\in L^\infty(\Omega,\C)$, and let $\varphi$ be a limiting Carleman weight for the semiclassical  Laplacian on $\tilde \Omega$. 
If $h>0$ is small enough, then for any $v\in L^2(\Omega)$, there is a solution $u\in H^4(\Omega)$ of the equation
\[
\mathcal{L}_\varphi u=v \quad \text{in} \quad \Omega,
\]
which satisfies 
\[
\|u\|_{H^4_{\emph{scl}}}\le \frac{C}{h^2} \|v\|_{L^2}.
\] 

\end{prop}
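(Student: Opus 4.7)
The plan is a standard Hahn--Banach/duality argument, driven entirely by the Carleman estimate \eqref{eq_Carleman_poly_perturbation} applied to the formal adjoint $\mathcal{L}_\varphi^*$.

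First I would observe that, since $-\varphi$ is also a limiting Carleman weight whenever $\varphi$ is, the estimate \eqref{eq_Carleman_poly_perturbation} holds with $\mathcal{L}_\varphi$ replaced by $\mathcal{L}_\varphi^*$ (the coefficients of the perturbation are still in $W^{1,\infty}\times L^\infty$). Specializing to $s=-4$ gives
\begin{equation*}
\|w\|_{L^2}\le \frac{C}{h^2}\,\|\mathcal{L}_\varphi^* w\|_{H^{-4}_{\textrm{scl}}} \qquad \textrm{for all } w\in C^\infty_0(\Omega),
\end{equation*}
and in particular $\mathcal{L}_\varphi^*$ is injective on $C^\infty_0(\Omega)$.

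Next, given $v\in L^2(\Omega)$, I would define a linear functional $L$ on the subspace $E:=\mathcal{L}_\varphi^*(C^\infty_0(\Omega))\subset H^{-4}_{\textrm{scl}}(\R^n)$ (elements being extended by zero outside $\Omega$) by
\begin{equation*}
L\bigl(\mathcal{L}_\varphi^* w\bigr)=\langle w,v\rangle_{L^2(\Omega)}.
\end{equation*}
Injectivity of $\mathcal{L}_\varphi^*$ makes this well-defined, and the Carleman estimate yields the bound
\begin{equation*}
|L(\mathcal{L}_\varphi^* w)|\le \|w\|_{L^2}\|v\|_{L^2}\le \frac{C}{h^2}\|v\|_{L^2}\,\|\mathcal{L}_\varphi^* w\|_{H^{-4}_{\textrm{scl}}}.
\end{equation*}
By the Hahn--Banach theorem I extend $L$ to a continuous linear functional on $H^{-4}_{\textrm{scl}}(\R^n)$ with the same norm $\le (C/h^2)\|v\|_{L^2}$. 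The Riesz representation theorem then provides $u\in H^4_{\textrm{scl}}(\R^n)$ (the dual of $H^{-4}_{\textrm{scl}}$) with $\|u\|_{H^4_{\textrm{scl}}}\le (C/h^2)\|v\|_{L^2}$ such that $L(f)=\langle f,u\rangle$ for every $f\in H^{-4}_{\textrm{scl}}$.

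Finally I would verify that $\mathcal{L}_\varphi u=v$ in $\Omega$. For any $w\in C^\infty_0(\Omega)$,
\begin{equation*}
\langle w,v\rangle = L(\mathcal{L}_\varphi^* w)=\langle \mathcal{L}_\varphi^* w,u\rangle=\langle w, \mathcal{L}_\varphi u\rangle,
\end{equation*}
so the equation holds in $\mathcal{D}'(\Omega)$; restricting $u$ from $\R^n$ to $\Omega$ gives the desired solution in $H^4(\Omega)$ with the stated norm bound. The only real issue is bookkeeping: one must be careful that the duality pairing $\langle H^{-4}_{\textrm{scl}},H^4_{\textrm{scl}}\rangle$ is the one compatible with the $L^2$ pairing used to form $\mathcal{L}_\varphi^*$, and that extension by zero is harmless because $w$ is compactly supported in $\Omega$. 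I do not anticipate any substantive obstacle beyond these routine identifications, since the Carleman estimate for $\mathcal{L}_\varphi^*$ does all the real work.
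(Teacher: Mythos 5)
Your proposal is correct and follows essentially the same route as the paper: the Carleman estimate for $\mathcal{L}_\varphi^*$ with $s=-4$ gives well-definedness and boundedness of the functional $L(\mathcal{L}_\varphi^* w)=(w,v)_{L^2}$ on $\mathcal{L}_\varphi^* C^\infty_0(\Omega)$ in the $H^{-4}_{\textrm{scl}}$-norm, followed by Hahn--Banach extension, Riesz representation in the $(H^{-4},H^{4})$ duality, and the distributional verification that $\mathcal{L}_\varphi u=v$ in $\Omega$. The extra care you take with the duality bookkeeping is sensible but does not change the argument.
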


\begin{proof}
Consider the following complex linear functional 
\[
L: \mathcal{L}_\varphi^* C^\infty_0(\Omega)\to \C, \quad \mathcal{L}_\varphi^* w\mapsto (w,v)_{L^2}. 
\]
By the Carleman estimate \eqref{eq_Carleman_poly_perturbation} for the formal adjoint $\mathcal{L}_\varphi^*$, the map $L$ is well-defined. 

Let $w\in  C^\infty_0(\Omega)$. We have
\[
|L(\mathcal{L}_\varphi^* w)|=|(w,v)_{L^2}|\le \|w\|_{L^2} \|v\|_{L^2}\le \frac{C}{h^2}\|\mathcal{L}_\varphi^* w\|_{H^{-4}_{\textrm{scl}}} \|v\|_{L^2},
\]
showing that $L$ is bounded in the $H^{-4}$-norm. Thus, by the Hahn-Banach theorem, we may extend $L$ to a linear continuous functional $\tilde L$ on $H^{-4}(\R^n)$ without increasing the norm. By the Riesz representation theorem, there exists $u\in H^4(\R^n)$ such that for all $w\in H^{-4}(\R^n)$, 
\[
\tilde L(w)=(w,u)_{(H^{-4},H^4)}, \quad \textrm{and}\quad \|u\|_{H^{4}_{\textrm{scl}}}\le \frac{C}{h^2}\|v\|_{L^2}. 
\]
Here $(\cdot,\cdot)_{(H^{-4},H^4)}$ stands for the usual $L^2$-duality. It follows that $\mathcal{L}_\varphi u=v$ in $\Omega$. This completes the proof. 
\end{proof}

Our next goal is to construct complex geometric optics solutions of  the equation 
\begin{equation}
\label{eq_new_main}
\mathcal{L}_{A,q}u=0\quad  \textrm{in} \quad \Omega,
\end{equation}
with $A\in C^4(\overline{\Omega},\C^4)$ and $q\in L^\infty(\Omega,\C)$,  i.e. solutions of the following form,
\begin{equation}
\label{eq_new_CGO_1}
u(x;h)=e^{\frac{\varphi+i\psi}{h}} (a_0(x)+ha_1(x)+ r(x; h)).
\end{equation}
Here $\varphi\in C^\infty(\tilde \Omega,\R)$ is a limiting Carleman weight for the semiclassical Laplacian on $\tilde \Omega$, $\psi\in C^\infty(\tilde \Omega,\R )$ is a solution to the eikonal equation
$p_{\varphi}(x,\nabla \psi)=0\quad \textrm{in}\quad  \tilde \Omega$,
where $p_\varphi$  is given by \eqref{eq_p_varphi}, i.e.
\begin{equation}
\label{eq_eikonal}
|\nabla \psi|^2=|\nabla \varphi|^2,\quad \nabla \varphi\cdot \nabla \psi=0, \quad \textrm{in}\quad \tilde \Omega,  
\end{equation}
  the amplitudes $a_0\in C^\infty(\overline{\Omega})$ and $a_1\in C^4(\overline{\Omega})$  are solutions of the first and second transport equations, and $r$ is a correction term, satisfying $\|r\|_{H^4_{\textrm{scl}}(\Omega)}= \mathcal{O}(h^2)$.

Following \cite{DKSU_2007, KenSjUhl2007}, we fix a point $x_0\in \R^n\setminus{\overline{\textrm{ch}(\Omega)}}$ and let the limiting Carleman weight  be 
\begin{equation}
\label{eq_varphi}
\varphi(x)=\frac{1}{2}\log|x-x_0|^2,
\end{equation}
and 
\begin{equation}
\label{eq_psi}
\psi(x)=\frac{\pi}{2}-\arctan\frac{\omega\cdot (x-x_0)}{\sqrt{(x-x_0)^2-(\omega\cdot(x-x_0))^2}}=\textrm{dist}_{\mathbb{S}^{n-1}}\bigg(\frac{x-x_0}{|x-x_0|},\omega\bigg),
\end{equation}
where $\omega\in \mathbb{S}^{n-1}$ is chosen so that $\psi$ is smooth near $\overline{\Omega}$. Thus, given  $\varphi$, the function  $\psi$ satisfies the eikonal equation \eqref{eq_eikonal} near $\overline{\Omega}$.

Consider the conjugated operator, 
\begin{equation}
\label{eq_indep_h}
e^{\frac{-(\varphi+i\psi)}{h}} h^{4}\mathcal{L}_{A,q} e^{\frac{\varphi+i\psi}{h}}=
(h^2\Delta +2hT )^2
+ h^{3}A\cdot hD+h^{3}A\cdot (D\varphi+iD\psi)  +h^{4}q,
\end{equation}
where 
\begin{equation}
\label{eq_operator_T}
T=(\nabla\varphi+i\nabla \psi)\cdot\nabla +\frac{1}{2}(\Delta \varphi + i \Delta \psi).
\end{equation}

Substituting 
\eqref{eq_new_CGO_1} into \eqref{eq_new_main} and collecting powers of $h$, we get
the first transport equation, 
\begin{equation}
\label{eq_transport_1}
T^2 a_0=0\quad \textrm{in}\quad \Omega,
\end{equation}
the second transport equation
\begin{equation}
\label{eq_transport_2}
T^2 a_1=-\frac{1}{2}(\Delta\circ T+T\circ \Delta)a_0-\frac{1}{4}A\cdot (D\varphi+iD\psi)a_0 \quad \textrm{in}\quad \Omega,
\end{equation}
and 
\begin{equation}
\label{eq_O(h)}
\begin{aligned}
e^{\frac{-(\varphi+i\psi)}{h}} h^{4}\mathcal{L}_{A,q} (e^{\frac{\varphi+i\psi}{h}} r)=&-e^{\frac{-(\varphi+i\psi)}{h}} h^{4}\mathcal{L}_{A,q} (e^{\frac{\varphi+i\psi}{h}} (a_0+ha_1))\\
=&-h^4\Delta^2(a_0+ha_1)-2h^4(\Delta\circ T+T\circ \Delta)a_1-h^4A\cdot Da_0\\
&-h^5A\cdot Da_1-h^4A\cdot (D\varphi+iD\psi)a_1-h^4q(a_0+ha_1).
\end{aligned}
\end{equation}

Let us now discuss the solvability of the equations \eqref{eq_transport_1}, \eqref{eq_transport_2} and \eqref{eq_O(h)}. 
  To this end  we follow \cite{DKSU_2007} and choose coordinates in $\R^n$ so that $x_0=0$ and $\overline{\tilde \Omega}\subset \{x_n>0\}$.  We set $\omega=e_1=(1,0_{\R^{n-1}})$, and introduce also the  cylindrical coordinates $(x_1,r\theta)$ on $\R^n$ with $r>0$ and $\theta\in \mathbb{S}^{n-2}$.  Consider the change of coordinates $x\mapsto (z,\theta)$, where $z=x_1+i r$ is a complex variable. Notice that $\Im z>0$ near $\overline{\Omega}$.  Then in these coordinates,  we have 
\[
\varphi=\log|z|=\Re\log z, \quad \psi=\frac{\pi}{2}-\arctan\frac{\Re z}{\Im z}=\Im \log z,
\]
when $\Im z>0$. 
Hence,
\[
\varphi+i\psi=\log z. 
\]
Moreover, we have 
\begin{equation}
\label{eq_nabla_cyl}
\nabla(\varphi+i\psi)=\frac{1}{z}(e_1+ie_r),
\end{equation}
where $e_r=(0,\theta)$, $\theta\in \mathbb{S}^{n-2}$, and 
\begin{equation}
\label{eq_nabla_cyl_2}
\nabla(\varphi+i\psi)\cdot \nabla=\frac{2}{z}\p_{\bar z}, \quad \Delta(\varphi+i\psi)=-\frac{2(n-2)}{z(z-\bar z)}. 
\end{equation}
In the cylindrical coordinates, the operator $T$, defined in \eqref{eq_operator_T}, has the form,  
\[
T=\frac{2}{z}\bigg(\p_{\bar z}-\frac{(n-2)}{2(z-\bar z)}\bigg). 
\]
Thus, it follows that 
\eqref{eq_transport_1}   has the form,
\[
\bigg(\p_{\bar z}-\frac{(n-2)}{2(z-\bar z)}\bigg)^2a_0=0\quad \textrm{in} \quad \Omega.
\]
In particular, one can take $a_0\in C^\infty(\overline{\Omega})$  satisfying $\bigg(\p_{\bar z}-\frac{(n-2)}{2(z-\bar z)}\bigg)a_0=0$. The general solution of the latter equation is given by
$ a_0=(z-\bar z)^{(2-n)/2}g_0$ with $g_0\in C^\infty(\overline{\Omega})$ satisfying $\p_{\bar z}g_0=0$. 

In the cylindrical coordinates,   the second transport equation \eqref{eq_transport_2} has the form,
\begin{equation}
\label{eq_transport_2_cyl}
\bigg(\p_{\bar z}-\frac{(n-2)}{2(z-\bar z)}\bigg)^2a_1=f\quad \textrm{in} \quad \Omega,
\end{equation}
where $f$ is given.  Notice that  $a_1$ will have in general the same regularity as $f$, which is the same as the regularity of $A$.    
It follows from \eqref{eq_O(h)} that we need four derivatives of $a_1$, which explains our regularity assumptions on $A$, i.e. $A\in C^4(\overline{\Omega},\C^n)$.  

In order to solve \eqref{eq_transport_2_cyl}, given $f\in C^4(\overline{\Omega})$,  one can find $v\in C^4(\overline{\Omega})$, which satisfies 
\begin{equation}
\label{eq_transport_sol_2}
\bigg(\p_{\bar z}-\frac{(n-2)}{2(z-\bar z)}\bigg)v=f \quad \textrm{in} \quad \Omega,
\end{equation}
and then solve 
\[
\bigg(\p_{\bar z}-\frac{(n-2)}{2(z-\bar z)}\bigg)a_1=v \quad \textrm{in} \quad \Omega.
\]
We look for a solution of \eqref{eq_transport_sol_2} in the form $v=e^{g}v_0$ with $g\in C^\infty(\overline{\Omega})$ satisfying $\p_{\bar z}g=\frac{n-2}{2(z-\bar z)}$. 
Thus, $v_0\in C^4(\overline{\Omega})$ can be obtained by solving $\p_{\bar z} v_0=e^{-g}f$, applying the Cauchy transform, i.e.
\[
v_0(z,\theta)=\frac{1}{\pi}\int_{\C}\frac{\chi(z-\zeta,\theta)e^{-g(z-\zeta,\theta)}f(z-\zeta,\theta)}{\zeta}d\Re \zeta d\Im \zeta,
\]
where $\chi\in C^\infty_0(\R^n)$ is such that $\chi=1$ near $\overline{\Omega}$. 
  Hence, the second transport equation \eqref{eq_transport_2} is solvable globally near $\overline{\Omega}$ with a solution $a_1\in C^4(\overline{\Omega})$.

Having chosen the amplitudes $a_0\in C^\infty(\overline{\Omega})$ and $a_1\in C^4(\overline{\Omega})$, we obtain from \eqref{eq_O(h)}
that 
\[
e^{\frac{- \varphi}{h}} h^{4} \mathcal{L}_{A,q} e^{\frac{\varphi}{h}} (e^{\frac{i\psi}{h}} r)=\mathcal{O}(h^4), 
\]
in $L^2(\Omega)$.  Thanks to 
Proposition \ref{prop_Hahn-Banach}, for $h>0$ small enough,  there exists a solution $r\in H^4(\Omega)$ of \eqref{eq_O(h)} such that $\|r\|_{H^4_{\textrm{scl}}}=\mathcal{O}(h^2)$. 

Summing up, we have the following result.

\begin{prop}

\label{prop_complex_geom_optics}

Let $A\in C^4(\overline{\Omega},\C^n)$,  $q\in L^\infty(\Omega,\C)$. Then for all $h>0$ small enough,  there exist solutions $u(x; h)\in H^4(\Omega)$ to the equation $\mathcal{L}_{A,q}u=0$ in $\Omega$, of the form
\[
u(x;h)=e^{\frac{\varphi+i\psi}{h}} (a_0(x)+ha_1(x)+ r(x; h)),
\]  
where  $\varphi\in C^\infty(\tilde \Omega,\R)$ is a limiting Carleman weight for the semiclassical Laplacian on $\tilde \Omega$, chosen as in  \eqref{eq_varphi}, $\psi\in C^\infty(\tilde \Omega,\R )$ is given by  \eqref{eq_psi}, 
$a_0\in C^\infty(\overline{\Omega})$ and $a_1\in C^4(\overline{\Omega})$ are solutions of the first and second transport equations   \eqref{eq_transport_1} and  \eqref{eq_transport_2}, respectively,  and $r$ is a correction term, satisfying $\|r\|_{H^4_{\emph{\textrm{scl}}}(\Omega)}= \mathcal{O}(h^2)$.  

\end{prop}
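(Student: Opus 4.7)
The plan is to realize the ansatz \eqref{eq_new_CGO_1} by using the expansion \eqref{eq_indep_h} of the conjugated operator $e^{-(\varphi+i\psi)/h} h^4 \mathcal{L}_{A,q} e^{(\varphi+i\psi)/h}$ in powers of $h$, selecting $a_0, a_1$ so that the coefficients of $h^2$ and $h^3$ vanish, and then solving for the remainder via the solvability estimate of Proposition~\ref{prop_Hahn-Banach}. Observe first that with $\varphi$ and $\psi$ as in \eqref{eq_varphi} and \eqref{eq_psi}, the eikonal system \eqref{eq_eikonal} holds throughout a neighborhood of $\overline{\Omega}$, so that in the expansion \eqref{eq_indep_h} the $h^0$ and $h^1$ coefficients of $(h^2\Delta + 2hT)^2$ vanish and the leading nontrivial contributions are precisely those producing the transport equations \eqref{eq_transport_1} and \eqref{eq_transport_2}.

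Next I would carry out the solvability of the two transport equations in the cylindrical coordinates $(z,\theta)$ discussed in the text. For \eqref{eq_transport_1}, using the factorization $T = \frac{2}{z}\bigl(\partial_{\bar z} - \frac{n-2}{2(z-\bar z)}\bigr)$, I take $a_0 = (z-\bar z)^{(2-n)/2} g_0$ with $g_0 \in C^\infty(\overline{\Omega})$ holomorphic in $z$; since $\Im z > 0$ on $\overline{\Omega}$, this is smooth. For \eqref{eq_transport_2}, I reduce the problem to two successive inhomogeneous $\bar\partial$-type equations with the weight $g$ satisfying $\partial_{\bar z} g = \frac{n-2}{2(z-\bar z)}$, and invert $\partial_{\bar z}$ by the Cauchy transform applied to a compactly supported extension $\chi e^{-g} f$. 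Because the Cauchy transform loses no derivatives in $C^k$-regularity (modulo localization) and the right-hand side of \eqref{eq_transport_2} depends on $A$ and first derivatives of $a_0$, the resulting $a_1$ lies in $C^4(\overline{\Omega})$ under the hypothesis $A \in C^4(\overline{\Omega},\C^n)$.

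With $a_0$ and $a_1$ fixed, the right-hand side of \eqref{eq_O(h)} is $h^4$ times an $L^2$-function whose norm is bounded by a constant depending only on $A$, $q$, $a_0$, $a_1$; in particular it is $\mathcal{O}(h^4)$ in $L^2(\Omega)$. Setting $v = e^{-(\varphi+i\psi)/h} h^4 \mathcal{L}_{A,q}\bigl(e^{(\varphi+i\psi)/h}(a_0+ha_1)\bigr)$ and applying Proposition~\ref{prop_Hahn-Banach} to the operator $\mathcal{L}_\varphi$ with source $-e^{-i\psi/h} v \in L^2(\Omega)$ yields some $\tilde r \in H^4(\Omega)$ with $\mathcal{L}_\varphi \tilde r = -e^{-i\psi/h} v$ and $\|\tilde r\|_{H^4_{\mathrm{scl}}} \le C h^{-2}\|v\|_{L^2} = \mathcal{O}(h^2)$. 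Taking $r = e^{i\psi/h}\tilde r$, which preserves the semiclassical $H^4$-norm up to a constant since $\psi$ is smooth and bounded on $\overline{\Omega}$, one checks directly that $u = e^{(\varphi+i\psi)/h}(a_0 + h a_1 + r)$ solves $\mathcal{L}_{A,q} u = 0$ in $\Omega$ and lies in $H^4(\Omega)$.

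The main obstacle is the bookkeeping of regularity and powers of $h$: one must make sure that the leading error after inserting $a_0 + h a_1$ is genuinely of order $h^4$ in $L^2$, which relies on having four derivatives of $a_1$ available (hence the hypothesis $A \in C^4$), and that the loss of two powers of $h$ in the solvability estimate of Proposition~\ref{prop_Hahn-Banach} is exactly compensated. Once the two transport equations are solved with the correct regularity and the remainder estimate is invoked, all the pieces fit together to give the claim.
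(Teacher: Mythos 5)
Your proposal is correct and follows essentially the same route as the paper: expand the conjugated operator via \eqref{eq_indep_h}, kill the $h^2$ and $h^3$ coefficients by solving the transport equations \eqref{eq_transport_1} and \eqref{eq_transport_2} in the cylindrical coordinates $(z,\theta)$ using the factorization of $T$ and the Cauchy transform, and then obtain the remainder from Proposition~\ref{prop_Hahn-Banach} applied to the $\mathcal{O}(h^4)$ right-hand side of \eqref{eq_O(h)}. The bookkeeping of regularity ($A\in C^4$ giving $a_1\in C^4$, hence four derivatives for \eqref{eq_O(h)}) and of the powers of $h$ (the loss of $h^{-2}$ in the solvability estimate against the $h^4$ source) is exactly as in the paper.
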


\section{Boundary Carleman estimates} Following \cite{DKSU_2007, KenSjUhl2007}, in order to prove that some boundary integrals tend to zero as $h\to 0$, we shall use Carleman estimates, involving the boundary terms. 

Let $\Omega\subset\R^n$, $n\ge 3$, be a bounded domain with $C^\infty$-boundary, and $\tilde \Omega\subset \R^n$ be an open set such that $\Omega\subset\subset \tilde\Omega$, and $\varphi\in C^\infty(\tilde \Omega,\R)$ be a limiting Carleman weight for the semiclassical Laplacian. We define 
\[
\p \Omega_\pm=\{x\in\p \Omega:\pm \p_{\nu}\varphi (x)\ge 0\}. 
\] 
We shall need the following result from \cite{KenSjUhl2007}.

\begin{prop}
Let $\varphi$ be a limiting Carleman weight for the semiclassical Laplacian on $\tilde \Omega$. Then there exists $C>0$ such that  the following Carleman estimate 
\begin{equation}
\label{eq_Carleman_lap_boundary}
\begin{aligned}
\|e^{\frac{-\varphi}{h}}(-h^2\Delta)u\|_{L^2}+ h^{3/2}\|\sqrt{-\p_\nu\varphi}\, e^{-\frac{\varphi}{h}} \p_\nu u\|_{L^2(\p \Omega_-)}\\
\ge \frac{1}{C}(h\|e^{\frac{-\varphi}{h}}u\|_{H^{1}_{\emph{scl}}}+h^{3/2}\|\sqrt{\p_\nu \varphi}\, e^{-\frac{\varphi}{h}} \p_\nu u\|_{L^2(\p \Omega_+)}),
\end{aligned}
\end{equation}
holds for all $u\in H^2(\Omega)$,  $u|_{\p \Omega}=0$, and  all $h>0$ small enough.  
\end{prop}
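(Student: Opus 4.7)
The plan is to conjugate with the weight, extract a single boundary term by integration by parts, and then invoke the interior Carleman estimate \eqref{eq_Carleman_lap}.

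Set $v := e^{-\varphi/h}u \in H^2(\Omega)$. The Dirichlet condition $u|_{\p\Omega}=0$ gives $v|_{\p\Omega}=0$ and $\p_\nu v|_{\p\Omega} = e^{-\varphi/h}\p_\nu u|_{\p\Omega}$, while
\[
e^{-\varphi/h}(-h^2\Delta) u = \tilde P_\varphi v, \qquad \tilde P_\varphi = -h^2\Delta + L, \quad L = -2h\nabla\varphi\cdot\nabla - h\Delta\varphi - |\nabla\varphi|^2,
\]
so the advertised inequality is just a Carleman estimate in the variable $v$. Expanding
\[
\|\tilde P_\varphi v\|_{L^2}^2 = h^4\|\Delta v\|_{L^2}^2 - 2h^2\,\mathrm{Re}\int_\Omega \Delta v\cdot\overline{Lv}\, dx + \|Lv\|_{L^2}^2
\]
and integrating the cross term by parts once produces the boundary integral $-2h^2\,\mathrm{Re}\int_{\p\Omega}\p_\nu v\cdot\overline{Lv}\,d\sigma$. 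Since $v|_{\p\Omega}=0$ forces $\nabla v|_{\p\Omega}=(\p_\nu v)\nu$, we have $Lv|_{\p\Omega}=-2h(\p_\nu\varphi)\p_\nu v$, so this piece equals $+4h^3\int_{\p\Omega}(\p_\nu\varphi)|\p_\nu v|^2\,d\sigma$, giving
\[
\|\tilde P_\varphi v\|_{L^2}^2 = \mathcal{I}(v) + 4h^3 \int_{\p\Omega}(\p_\nu\varphi)|\p_\nu v|^2\, d\sigma,
\]
where $\mathcal{I}(v)$ is a purely interior quadratic form coinciding with $\|\tilde P_\varphi v\|_{L^2}^2$ on $C^\infty_0(\Omega)$.

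Next I would extract the bound $\mathcal{I}(v)\ge (h^2/C^2)\|v\|_{H^1_{\textrm{scl}}}^2$ for all $v\in H^2(\Omega)\cap H^1_0(\Omega)$, which is the squared form of \eqref{eq_Carleman_lap} applied to $v$ (noting that $-\varphi$ is again a limiting Carleman weight). That interior estimate is proved on $C^\infty_0(\Omega)$ via the limiting Carleman weight hypothesis $\{\Re p_\varphi, \Im p_\varphi\}=0$ on $\{p_\varphi=0\}$ together with a semiclassical Garding-type argument as in \cite{KenSjUhl2007, Salo_Tzou_2009}, and a standard approximation extends it to $H^2(\Omega)\cap H^1_0(\Omega)$. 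Splitting $\p\Omega = \p\Omega_+\cup\p\Omega_-$ and moving the boundary integral over $\p\Omega_-$ (where $-\p_\nu\varphi\ge 0$) to the left gives
\[
\|\tilde P_\varphi v\|_{L^2}^2 + 4h^3\int_{\p\Omega_-}(-\p_\nu\varphi)|\p_\nu v|^2\, d\sigma \ge \frac{h^2}{C^2}\|v\|_{H^1_{\textrm{scl}}}^2 + 4h^3\int_{\p\Omega_+}(\p_\nu\varphi)|\p_\nu v|^2\, d\sigma.
\]
Taking square roots (and using $\sqrt{a^2+b^2}\le a+b$ on the left, $\sqrt{a^2+b^2}\ge (a+b)/\sqrt{2}$ on the right) and passing back to $u=e^{\varphi/h}v$ delivers the claimed inequality, with a possibly larger constant.

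The main obstacle is the clean identification of the boundary contribution with the right sign and coefficient: all the cancellations between the single integration by parts and the Dirichlet condition must be tracked so that only the integrand $(\p_\nu\varphi)|\p_\nu v|^2$ survives on $\p\Omega$, with the positive prefactor $4h^3$. Once this is in place, the $\p\Omega_-$ sign flip and the extension of the interior estimate from $C^\infty_0(\Omega)$ to $H^2(\Omega)\cap H^1_0(\Omega)$ by density-and-continuity are routine; the limiting Carleman weight condition is precisely what delivers the semiclassical Garding positivity of $\mathcal{I}(v)$ with the announced gain of one derivative.
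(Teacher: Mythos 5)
First, note that the paper does not actually prove this proposition: it is quoted verbatim from \cite{KenSjUhl2007}, so the only "proof" in the paper is the citation. Your write-up therefore has to stand on its own, and it has the right skeleton — conjugate to $v=e^{-\varphi/h}u$, observe that $v|_{\p\Omega}=0$ forces $Lv|_{\p\Omega}=-2h(\p_\nu\varphi)\p_\nu v$ so that a single integration by parts in the cross term isolates exactly $4h^3\int_{\p\Omega}(\p_\nu\varphi)|\p_\nu v|^2\,dS$, and then split $\p\Omega=\p\Omega_+\cup\p\Omega_-$. That bookkeeping is correct.

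The gap is in the step where you claim $\mathcal{I}(v)\ge (h^2/C^2)\|v\|_{H^1_{\textrm{scl}}}^2$ for all $v\in H^2(\Omega)\cap H^1_0(\Omega)$ by "standard approximation" from the estimate \eqref{eq_Carleman_lap} on $C^\infty_0(\Omega)$. The form $\mathcal{I}$ contains $h^4\|\Delta v\|_{L^2}^2$ and $\int_\Omega\nabla v\cdot\overline{\nabla(Lv)}\,dx$, so it is continuous only for the $H^2$ topology; but the closure of $C^\infty_0(\Omega)$ in $H^2(\Omega)$ is $H^2_0(\Omega)$, i.e.\ the functions with $v|_{\p\Omega}=\p_\nu v|_{\p\Omega}=0$. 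A function in $H^2\cap H^1_0$ with nonvanishing normal derivative — which is the only case of interest here — cannot be approximated in $H^2$ by compactly supported functions, so the inequality does not pass to the limit. Worse, your identity shows that the asserted lower bound on $\mathcal{I}$ over $H^2\cap H^1_0$ is \emph{equivalent} to the proposition being proved, so the argument as written is circular: you have reduced the boundary Carleman estimate to itself. The genuine content, which is what \cite{KenSjUhl2007} supplies, is to run the positive-commutator computation ($P_\varphi=A+iB$, convexified weight $\varphi+h\varphi^2/(2\varepsilon)$, semiclassical G\aa rding for $ih^{-1}[A,B]$ modulo $A$ and $B$) directly for functions vanishing only to first order at $\p\Omega$, tracking \emph{all} the boundary contributions from $\|Av\|^2$, $\|Bv\|^2$ and the cross terms and verifying that, under the Dirichlet condition, they collapse to the single signed term $h^3\int(\p_\nu\varphi)|\p_\nu v|^2$. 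That computation cannot be bypassed by density, and it is the one missing piece of your proof.
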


Iterating \eqref{eq_Carleman_lap_boundary} two times and adding a first order perturbation, we get the following boundary Carleman estimate for the first order perturbation of the biharmonic operator.  
 
\begin{prop} 

Let $A\in W^{1,\infty}(\Omega,\C^n)$,  $q\in L^\infty(\Omega,\C)$, and $\varphi$ be a limiting Carleman weight for the semiclassical Laplacian on $\tilde \Omega$.  Then the following estimate
\begin{equation}
\label{eq_boundary_Carleman}
\begin{aligned}
\|e^{\frac{-\varphi}{h}}&(h^4 \mathcal{L}_{A,q})u\|_{L^2}+ h^{3/2}\|\sqrt{-\p_\nu\varphi}e^{-\frac{\varphi}{h}}\p_\nu(-h^2\Delta u)\|_{L^2(\p \Omega_-)}\\
&+ h^{5/2}\|\sqrt{-\p_\nu\varphi}e^{-\frac{\varphi}{h}}\p_\nu u\|_{L^2(\p \Omega_-)}\\
&\ge \frac{1}{C}(h^2\|e^{\frac{-\varphi}{h}}u\|_{H^{1}_{\emph{scl}}}+h^{3/2}\|\sqrt{\p_\nu \varphi}e^{-\frac{\varphi}{h}}\p_\nu (-h^2\Delta u)\|_{L^2(\p \Omega_+)}\\
&+h^{5/2}\|\sqrt{\p_\nu \varphi}e^{-\frac{\varphi}{h}}\p_\nu u\|_{L^2(\p \Omega_+)}),
\end{aligned}
\end{equation}
holds, 
for all $u\in H^4(\Omega)$,  $u|_{\p \Omega}=(\Delta u)|_{\p \Omega}=0$,  and  all $h>0$ small enough.  

\end{prop}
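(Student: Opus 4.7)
The plan is to derive the estimate by iterating the second-order boundary Carleman estimate \eqref{eq_Carleman_lap_boundary} twice and then absorbing the first-order perturbation using the smallness of $h$. The two boundary conditions $u|_{\p\Omega}=(\Delta u)|_{\p\Omega}=0$ are precisely what is needed so that both $u$ and $v:=-h^2\Delta u$ have vanishing Dirichlet trace, making \eqref{eq_Carleman_lap_boundary} applicable to each of them separately. Since $u\in H^4(\Omega)$, we also have $v\in H^2(\Omega)$, so there are no regularity obstructions.

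First I would apply \eqref{eq_Carleman_lap_boundary} to $v=-h^2\Delta u$, which gives
\[
\|e^{-\varphi/h}(h^2\Delta)^2u\|_{L^2}+h^{3/2}\|\sqrt{-\p_\nu\varphi}\,e^{-\varphi/h}\p_\nu(-h^2\Delta u)\|_{L^2(\p\Omega_-)}
\]
as an upper bound for $\frac{h}{C}\|e^{-\varphi/h}(-h^2\Delta u)\|_{H^1_{\text{scl}}}$ plus the analogous boundary term on $\p\Omega_+$. In particular, this quantity dominates $\frac{h}{C}\|e^{-\varphi/h}(-h^2\Delta u)\|_{L^2}$. Next I would apply \eqref{eq_Carleman_lap_boundary} to $u$ itself and use the previous bound to replace $\|e^{-\varphi/h}(-h^2\Delta u)\|_{L^2}$ on the left-hand side. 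After multiplying through by $h$ to restore a uniform power count, this yields
\[
\|e^{-\varphi/h}(h^2\Delta)^2u\|_{L^2}+h^{3/2}\|\sqrt{-\p_\nu\varphi}\,e^{-\varphi/h}\p_\nu(-h^2\Delta u)\|_{L^2(\p\Omega_-)}+h^{5/2}\|\sqrt{-\p_\nu\varphi}\,e^{-\varphi/h}\p_\nu u\|_{L^2(\p\Omega_-)}
\]
on the left, controlling $\frac{h^2}{C}\|e^{-\varphi/h}u\|_{H^1_{\text{scl}}}+\frac{h^{5/2}}{C}\|\sqrt{\p_\nu\varphi}\,e^{-\varphi/h}\p_\nu u\|_{L^2(\p\Omega_+)}$ on the right. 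Adding the original inequality from the first iteration supplies the remaining $h^{3/2}\|\sqrt{\p_\nu\varphi}\,e^{-\varphi/h}\p_\nu(-h^2\Delta u)\|_{L^2(\p\Omega_+)}$ term on the right-hand side. This delivers \eqref{eq_boundary_Carleman} for the unperturbed operator $(h^2\Delta)^2$.

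Finally, I would incorporate the first-order perturbation $h^3A\cdot hD+h^4q$. Writing $e^{-\varphi/h}\circ hD_j=(hD_j-i\p_j\varphi)\circ e^{-\varphi/h}$ gives the pointwise bound
\[
\|e^{-\varphi/h}(h^3A\cdot hD+h^4q)u\|_{L^2}\le Ch^3\|e^{-\varphi/h}u\|_{H^1_{\text{scl}}},
\]
where $C$ depends on $\|A\|_{L^\infty}$, $\|q\|_{L^\infty}$ and $\|\nabla\varphi\|_{L^\infty}$. By the triangle inequality, replacing $(h^2\Delta)^2u$ by $h^4\mathcal{L}_{A,q}u$ on the left introduces an error of this size, which is $o(h^2)\|e^{-\varphi/h}u\|_{H^1_{\text{scl}}}$ and can therefore be absorbed into the term $\frac{h^2}{C}\|e^{-\varphi/h}u\|_{H^1_{\text{scl}}}$ on the right once $h$ is taken sufficiently small.

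The main place requiring care is the bookkeeping in the iteration step: one must ensure that in combining the two applications of \eqref{eq_Carleman_lap_boundary} the powers of $h$ appearing on the interior and each of the two boundary terms match the statement exactly, and that the $\p\Omega_+$ boundary term from the first iteration is not discarded in the substitution. Beyond that, no genuinely new analytical input is needed: the whole estimate is a clean consequence of \eqref{eq_Carleman_lap_boundary}, the boundary conditions on $u$ and $\Delta u$, and the elementary commutator identity for $e^{-\varphi/h}\circ hD_j$.
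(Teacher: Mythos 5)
Your proposal is correct and follows essentially the same route as the paper: iterate the second-order boundary Carleman estimate \eqref{eq_Carleman_lap_boundary} on $u$ and on $-h^2\Delta u$ (both admissible thanks to the Navier conditions), match the powers of $h$, and absorb $e^{-\varphi/h}(h^3A\cdot hD+h^4q)u=\mathcal{O}(h^3)\|e^{-\varphi/h}u\|_{H^1_{\mathrm{scl}}}$ into the right-hand side for small $h$. The power bookkeeping in your iteration matches the statement exactly, so no gap remains.
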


Here we use that
\begin{align*}
\|e^{-\frac{\varphi}{h}}h^4A\cdot Du\|_{L^2}\le  h^3 \sum_{j=1}^n\bigg(\|h D_j (A_je^{-\frac{\varphi}{h}} u)\|_{L^2}+\| (hD_j(A_je^{-\frac{\varphi}{h}}))u\|_{L^2}\bigg)\\
\le  h^3 \sum_{j=1}^n\bigg(2\|(hD_jA_j)e^{-\frac{\varphi}{h}} u\|_{L^2}+\| A_jhD_j(e^{-\frac{\varphi}{h}}u)\|_{L^2}+ \| A_j(D_j\varphi)e^{-\frac{\varphi}{h}}u\|_{L^2}\bigg)\\
\le \mathcal{O}(h^3)\|e^{-\frac{\varphi}{h}} u\|_{H^1_{\textrm{scl}}}.
\end{align*}

Notice that when $\varphi$ is given by \eqref{eq_varphi}, we have $\p_\nu \varphi(x)=\frac{(x-x_0)\cdot\nu(x)}{|x-x_0|^2}$ and therefore, $\p \Omega_-=F(x_0)$. 

\section{Integral identity, needed to recover the first order perturbation}

We shall need the following Green's formula, see \cite{Grubbbook2009},
\begin{equation}
\label{eq_Green_form}
\begin{aligned}
&\int_{\Omega} (\mathcal{L}_{A,q}u)\overline{v} dx-\int_{\Omega} u\overline{\mathcal{L}_{A,q}^*v} dx=-i\int_{\p \Omega} \nu(x)\cdot u A \overline{v} dS -\int_{\p \Omega} \p_\nu (-\Delta u) \overline{v}dS\\
 &  +\int_{\p \Omega} (-\Delta u)\overline{\p_\nu v }dS - \int_{\p \Omega} \p_{\nu} u\overline{( -\Delta v)} dS
  +\int_{\p \Omega} u\overline{(\p_\nu (-\Delta v))}dS,
\end{aligned}
\end{equation}
valid for all $u,v\in H^4(\Omega)$. Here  $\mathcal{L}_{A,q}^*=\mathcal{L}_{\bar{A},i^{-1}\nabla \cdot \bar{A}+\bar q}$ is the adjoint of $\mathcal{L}_{A,q}$, $\nu$ is the unit outer normal to the boundary $\p \Omega$,  and $dS$ is the surface measure on $\p \Omega$. 

Let $(f_0,f_1)\in H^{7/2}(\p \Omega)\times H^{3/2}(\p \Omega)$ and $u_j\in H^4(\Omega)$  be such that 
\[
\mathcal{L}_{A^{(j)},q^{(j)}}u_j=0 \  \textrm{in} \  \Omega,\ j=1,2,\quad u_1|_{\p \Omega}=u_2|_{\p \Omega}=f_0,\ (\Delta u_1)|_{\p \Omega}=(\Delta u_2)|_{\p \Omega}=f_1.
\]
Then by assumption of Theorem \ref{thm_main}, there exists an open neighborhood $\tilde F$ of $F(x_0)=\p \Omega_-$ in $\p \Omega$, such that 
\[
\p_\nu u_1|_{\tilde F}=\p_\nu u_2|_{\tilde F},\quad \p_\nu (\Delta u_1)|_{\tilde F}=\p_\nu (\Delta u_2)|_{\tilde F}. 
\]
We have
\begin{equation}
\label{eq_need_bce}
\mathcal{L}_{A^{(1)},q^{(1)}}(u_1-u_2)=(A^{(2)}-A^{(1)})\cdot Du_2 + (q^{(2)}-q^{(1)})u_2\quad \textrm{in}\quad  \Omega. 
\end{equation}
Let $v\in H^{4}(\Omega)$ satisfy $ 
\mathcal{L}_{A^{(1)},q^{(1)}}^* v=0$ in $\Omega$. 
Using \eqref{eq_Green_form}, we get
\begin{equation}
\label{eq_identity_main}
\begin{aligned}
\int_{\Omega} ((A^{(2)}-A^{(1)})\cdot Du_2)\overline{v}dx + \int_{\Omega} (q^{(2)}-q^{(1)})u_2 \overline{v}dx\\
=-\int_{\p \Omega\setminus{\tilde F}} \p_{\nu} (-\Delta(u_1-u_2))\overline{v}dS - \int_{\p \Omega\setminus{\tilde F}} \p_{\nu}(u_1-u_2)\overline{(-\Delta v)}dS. 
\end{aligned}
\end{equation}

To show the equalities $A^{(1)}=A^{(2)}$ and $q^{(1)}=q^{(2)}$, the idea is to use the identity \eqref{eq_identity_main} with $u_2$ and $v$ being complex geometric optics solutions and to use the boundary Carleman estimate \eqref{eq_boundary_Carleman} to show that the boundary integrals  in \eqref{eq_identity_main},  multiplied by some power of $h$,  tend to zero as $h\to 0$. 

To construct the appropriate complex geometric optics solutions, let $\varphi$ and $\psi$ be defined by \eqref{eq_varphi} and \eqref{eq_psi}, respectively. Then thanks to  Proposition \ref{prop_complex_geom_optics}, we can take
\begin{equation}
\label{eq_u_2_optics}
u_2(x;h)=e^{\frac{\varphi+i\psi}{h}}(a_0^{(2)}(x)+ha_1^{(2)}(x)+r^{(2)}(x;h)),
\end{equation}
\begin{equation}
\label{eq_v_optics}
v(x;h)=e^{\frac{-\varphi+i\psi}{h}}(a_0^{(1)}(x)+ha_1^{(1)}(x)+r^{(1)}(x;h)),
\end{equation}
where $a_0^{(j)}\in C^\infty(\overline{\Omega})$ and $a_1^{(j)}\in C^4(\overline{\Omega})$, $j=1,2$, are  such that 
\begin{equation}
\label{eq_transport_a_1,a_2}
\begin{aligned}
\bigg( (\nabla \varphi + i \nabla \psi)\cdot \nabla + \frac{\Delta\varphi + i \Delta \psi}{2}\bigg)^2 a_0^{(2)}=0,\\
\bigg( (-\nabla \varphi + i \nabla \psi)\cdot \nabla + \frac{-\Delta\varphi + i \Delta \psi}{2}\bigg)^2 a_0^{(1)}=0,
\end{aligned}
\end{equation}
and
\begin{equation}
\label{eq_remainder_b}
\|r^{(j)}\|_{H^4_{\textrm{scl}}}=\mathcal{O}(h^2).
\end{equation}

Substituting $u_2$ and $v$, given by \eqref{eq_u_2_optics} and \eqref{eq_v_optics}, in \eqref{eq_identity_main}, we get
\begin{equation}
\label{eq_identity_main_2}
\begin{aligned}
\int_\Omega (A^{(2)}-A^{(1)})\cdot\frac{1}{h}(D\varphi + i D\psi)(a_0^{(2)}+ha_1^{(2)}+r^{(2)})(\overline{a_0^{(1)}}+h\overline{a_1^{(1)}}+\overline{r^{(1)}})dx \\
+\int_{\Omega} (A^{(2)}-A^{(1)})\cdot (D a_0^{(2)}+h D a_1^{(2)}+ Dr^{(2)})(\overline{a_0^{(1)}}+h\overline{a_1^{(1)}}+\overline{r^{(1)}})dx\\
+\int_\Omega (q^{(2)}-q^{(1)})(a_0^{(2)}+ha_1^{(2)}+r^{(2)})(\overline{a_0^{(1)}}+h\overline{a_1^{(1)}}+\overline{r^{(1)}})dx\\
=-\int_{\p \Omega\setminus{\tilde F}} \p_{\nu} (-\Delta(u_1-u_2))\overline{v}dS - \int_{\p \Omega\setminus{\tilde F}} \p_{\nu}(u_1-u_2)\overline{(-\Delta v)}dS. 
\end{aligned}
\end{equation}

Let us now show that 
\begin{equation}
\label{eq_boundary_int_1}
h \int_{\p \Omega\setminus{\tilde F}} \p_{\nu} (-\Delta(u_1-u_2))\overline{v}dS\to 0,\quad \textrm{as}\quad h\to +0,
\end{equation}
and 
\begin{equation}
\label{eq_boundary_int_2}
h\int_{\p \Omega\setminus{\tilde F}} \p_{\nu}(u_1-u_2)\overline{(-\Delta v)}dS\to 0,\quad \textrm{as}\quad h\to +0,
\end{equation}
where $u_2$ and $v$, given by \eqref{eq_u_2_optics} and \eqref{eq_v_optics}.  
To this end, notice that \eqref{eq_remainder_b} implies that 
$r^{(j)}=\mathcal{O}(1)$, $j=1,2$, in the standard  Ê  ($h=1$)  $H^2(\Omega)$--norm. Hence, 
\begin{equation}
\label{eq_remainder_b_1} 
r^{(j)}|_{\p \Omega}=\mathcal{O}(1) \quad \textrm{in}\quad L^2(\p \Omega),
\end{equation}
and 
\begin{equation}
\label{eq_remainder_b_2} 
\nabla r^{(j)}|_{\p \Omega}=\mathcal{O}(1) \quad \textrm{in}\quad L^2(\p \Omega). 
\end{equation}
Moreover, it follows from  \eqref{eq_remainder_b} that $r^{(j)}=\mathcal{O}(1/h)$, $j=1,2$, in the standard  Ê  $H^3(\Omega)$--norm. Thus, 
\begin{equation}
\label{eq_remainder_b_3} 
\Delta r^{(j)}|_{\p \Omega}=\mathcal{O}(1/h) \quad \textrm{in}\quad L^2(\p \Omega). 
\end{equation}
Furthermore, by the definition of $F(x_0)$ and $\tilde F$,  there exists $\varepsilon >0$ fixed  such that 
\[
\p\Omega_-=F(x_0)\subset F_\varepsilon: =\{x\in \p \Omega:\p_\nu\varphi\le \varepsilon\}\subset \tilde F. 
\]

Using the Cauchy--Schwarz inequality and \eqref{eq_remainder_b_1},
 we have
\begin{align*}
&\bigg| h \int_{\p \Omega\setminus{\tilde F}} \p_{\nu} (-\Delta(u_1-u_2))\overline{v}dS \bigg|\\
 &\le 
h\int_{\p \Omega\setminus{F_\varepsilon}} |\p_{\nu} (-\Delta(u_1-u_2))| e^{\frac{-\varphi}{h}}|a_0^{(1)}+ha_1^{(1)}+r^{(1)}|dS\\
&\le \mathcal{O}(h)\bigg(\int  _{\p \Omega\setminus{F_\varepsilon}} \varepsilon |\p_{\nu} (-\Delta(u_1-u_2))|^2 e^{\frac{-2\varphi}{h}}dS\bigg)^{1/2}\|a_0^{(1)}+ha_1^{(1)}+r^{(1)}\|_{L^2(\p \Omega)}\\
&\le \mathcal{O}(h)\|\sqrt{\p_{\nu}\varphi}e^{\frac{-\varphi}{h}}\p_{\nu} (-\Delta(u_1-u_2))\|_{L^2(\p \Omega_+)}.
\end{align*}
By the boundary Carleman estimate \eqref{eq_boundary_Carleman} and \eqref{eq_need_bce}, we get
\begin{equation}
\label{eq_similar_bound}
\begin{aligned}
&\mathcal{O}(h)\|\sqrt{\p_{\nu}\varphi}e^{\frac{-\varphi}{h}}\p_{\nu} (-\Delta(u_1-u_2))\|_{L^2(\p \Omega_+)} \le \mathcal{O}(h^{3/2})\|e^{\frac{-\varphi}{h}}\mathcal{L}_{A^{(1)},q^{(1)}}(u_1-u_2)\|_{L^2(\Omega)}\\
&=\mathcal{O}(h^{3/2}) \| e^{\frac{-\varphi}{h}}(A^{(2)}-A^{(1)})\cdot Du_2+e^{\frac{-\varphi}{h}}(q^{(2)}-q^{(1)})u_2 \|_{L^2(\Omega)}\\
&\le \mathcal{O}(h^{3/2}) \| (A^{(2)}-A^{(1)})\cdot \frac{1}{h}(D\varphi+iD\psi)(a_0^{(2)}+ha_1^{(2)}+r^{(2)})\|_{L^2(\Omega)}\\ 
&+ \mathcal{O}(h^{3/2}) \| (A^{(2)}-A^{(1)})\cdot (Da_0^{(2)}+hDa_1^{(2)}+Dr^{(2)})\|_{L^2(\Omega)}\\
&+ \mathcal{O}(h^{3/2}) \| (q^{(2)}-q^{(1)})(a_0^{(2)}+ha_1^{(2)}+r^{(2)})\|_{L^2(\Omega)}
\le \mathcal{O}(h^{1/2}).
\end{aligned}
\end{equation}
Thus, \eqref{eq_boundary_int_1} follows.

To establish \eqref{eq_boundary_int_2}, we first notice that thanks to \eqref{eq_eikonal}, we have
\begin{align*}
\Delta v=&e^{\frac{-\varphi+i\psi}{h}}\bigg(\frac{-\Delta\varphi+i\Delta\psi}{h}(a_0^{(1)}+ha_1^{(1)}+r^{(1)})\\
&+2\frac{-\nabla\varphi+i\nabla\psi}{h}\cdot (\nabla a_0^{(1)}+h\nabla a_1^{(1)}+\nabla r^{(1)})+ \Delta a_0^{(1)}+h\Delta a_1^{(1)}+\Delta r^{(1)}.
\bigg)
\end{align*}
This together with \eqref{eq_remainder_b_1}, \eqref{eq_remainder_b_2} and \eqref{eq_remainder_b_3} imply that
\[
\Delta v|_{\p \Omega}=e^{\frac{-\varphi}{h}} \tilde v, \quad \tilde v=\mathcal{O}(1/h)\quad \textrm{in}\quad L^2(\p \Omega). 
\]  
This and the Cauchy-Schwarz inequality yield that 
\begin{align*}
\bigg|h\int_{\p \Omega\setminus{\tilde F}} \p_{\nu}(u_1-u_2)\overline{(-\Delta v)}dS\bigg|\le \mathcal{O}(1)\|\sqrt{\p_\nu \varphi}e^{\frac{-\varphi}{h}}\p_\nu(u_1-u_2)\|_{L^2(\p \Omega_+)}\\
\le \mathcal{O}(h^{3/2})\|e^{\frac{-\varphi}{h}}\mathcal{L}_{A^{(1)},q^{(1)}}(u_1-u_2)\|_{L^2(\Omega)}\le \mathcal{O}(h^{1/2}).\\
\end{align*}
Here we use the boundary Carleman estimate \eqref{eq_boundary_Carleman} and proceed similarly to \eqref{eq_similar_bound}.  Hence, \eqref{eq_boundary_int_2} follows.

\section{Determining the first order perturbation}

Multiplying \eqref{eq_identity_main_2} by $h$ and letting $h\to+0$,  and using \eqref{eq_boundary_int_1} and \eqref{eq_boundary_int_2}, we get
\begin{equation}
\label{eq_identity_main_3}
\int_\Omega(A^{(2)}-A^{(1)})\cdot (\nabla \varphi + i \nabla \psi)a_0^{(2)}\overline{a_0^{(1)}}dx=0,
\end{equation}
where $a_0^{(1)},a_0^{(2)}\in C^\infty(\overline{\Omega})$ satisfy the transport equations \eqref{eq_transport_a_1,a_2}.  

Consider now \eqref{eq_identity_main_3} with $a_0^{(1)}=e^{\Phi_1}$ and $a_0^{(2)}=e^{\Phi_2}$ such that 
\begin{equation}
\label{eq_transport_recovery}
\begin{aligned}
 (\nabla \varphi + i \nabla \psi)\cdot \nabla \Phi_2 + \frac{\Delta\varphi + i \Delta \psi}{2}=0,\\
(\nabla \varphi + i \nabla \psi)\cdot \nabla \overline{\Phi_1} + \frac{\Delta\varphi + i \Delta \psi}{2}=0,
\end{aligned}
\end{equation}
and $\Phi_j\in C^\infty(\overline{\Omega})$, $j=1,2$. 
In the coordinates $(z,\theta)$, introduced in Section \ref{sec_CGO}, the equations \eqref{eq_transport_recovery} have the following form,
\begin{align*}
\p_{\bar z} \Phi_2-\frac{(n-2)}{2(z-\bar z)}=0,\quad \p_{\bar z} \overline{\Phi_1}-\frac{(n-2)}{2(z-\bar z)}=0.
\end{align*}
Hence,
\begin{equation}
\label{eq_miracle}
\p_{\bar z} (\Phi_2+\overline{\Phi_1})-\frac{n-2}{z-\bar z}=0.
\end{equation}
Notice that $ge^{\Phi_2}$ with $g\in C^\infty(\overline{\Omega})$ such that 
\begin{equation}
\label{eq_g}
(\nabla \varphi + i \nabla \psi)\cdot \nabla g=0\quad \textrm{in}\quad \Omega,
\end{equation}
also satisfies the transport equation \eqref{eq_transport_a_1,a_2}. 
In the coordinates $(z,\theta)$, the condition \eqref{eq_g} reads $\p_{\bar z} g=0$ in $\Omega$. 

Substituting  $a_0^{(2)}=ge^{\Phi_2}$ and $a_0^{(1)}=e^{\Phi_1}$  in \eqref{eq_identity_main_3}, we get
\begin{equation}
\label{eq_identity_main_4}
\int_\Omega(A^{(2)}-A^{(1)})\cdot (\nabla \varphi + i \nabla \psi)ge^{\Phi_2+\overline{\Phi_1}}dx=0.
\end{equation}
In the cylindrical coordinates $(z,\theta)$, by \eqref{eq_nabla_cyl}, the identity \eqref{eq_identity_main_4} has the form,
\[
\int_\Omega(A^{(2)}-A^{(1)})\cdot (e_1+i e_r)\frac{g}{z}e^{\Phi_2+\overline{\Phi_1}}r^{n-2}drd\theta dx_1=0.
\]
Let $P_\theta$ be the two-dimensional plane consisting of points $(x_1,r\theta)$ for $\theta$ fixed, and write $\Omega_\theta=\Omega\cap P_\theta$.  We also use the complex variable $z=x_1+ir$, which identifies $P_\theta$ with $\C$. 

Taking $g=g_1(z)\otimes g_2(\theta)\in C^\infty(\overline{\Omega})$, where $g_1$ is holomorphic and varying $g_2$, we obtain as in \cite{DKSU_2007}, for almost all $\theta\in \mathbb{S}^{n-2}$,
\[
\int_{\Omega_\theta}(A^{(2)}-A^{(1)})\cdot (e_1+i e_r)\frac{g_1}{z} e^{\Phi_2+\overline{\Phi_1}}(z-\bar z)^ {n-2}dz\wedge d\bar z=0,
\]
and hence, for all $\theta \in \mathbb{S}^{n-2}$, by continuity. 
Since $\Im z>0$ in $\Omega$, the function $g_1/z$ is an arbitrary holomorphic function and therefore, we can drop the factor $1/z$. 
We get
\[
\int_{\Omega_\theta}(A^{(2)}-A^{(1)})\cdot (e_1+i e_r) g_1 e^{\Phi_2+\overline{\Phi_1}}(z-\bar z)^ {n-2}dz\wedge d\bar z=0.
\]
By \eqref{eq_miracle}, we conclude that
\begin{equation}
\label{eq_miracle_2}
\p_{\bar z}(e^{\Phi_2+\overline{\Phi_1}}(z-\bar z)^ {n-2})=0.
\end{equation}
Since $\Im z>0$ in $\Omega$, the holomorphic function $e^{\Phi_2+\overline{\Phi_1}}(z-\bar z)^ {n-2}$ is nowhere vanishing, and we can choose $g_1=(e^{\Phi_2+\overline{\Phi_1}}(z-\bar z)^ {n-2})^{-1}g_0$, where $\p_{\bar z}g_0=0$ in $\Omega$. We get 
\begin{equation}
\label{eq_g_0}
\int_{\Omega_\theta}(A^{(2)}-A^{(1)})\cdot (e_1+i e_r)g_0dz\wedge d\bar z=0.
\end{equation}

Choosing complex geometric optics solutions $u_2$ and $v$ as in \eqref{eq_u_2_optics} and \eqref{eq_v_optics}, where $\psi$ is replaced by $-\psi$, we conclude, by repeating the arguments above, that also
\begin{equation}
\label{eq_tilde_g_0}
\int_{\Omega_\theta}(A^{(2)}-A^{(1)})\cdot (e_1-i e_r)\tilde g_0 dz\wedge d\bar z=0,
\end{equation}
where  $\p_{z} \tilde g_0=0$ in $\Omega$.

Thus, since any  $\xi\in P_\theta$ is a linear combination of $e_1$ and $e_r$, choosing $g_0=\tilde g_0=1$ in 
\eqref{eq_g_0} and \eqref{eq_tilde_g_0}, 
 we get 
\[
\int_{\Omega_\theta}(A^{(2)}-A^{(1)})\cdot \xi dz\wedge d\bar z=0 \quad\textrm{for all } \xi\in P_\theta. 
\]
At this point we are exactly in the same situation as the one, described in \cite[Section 5]{DKSU_2007}, see formula (5.7) there. Repeating the arguments given in that paper, following this formula,  we obtain that $d(A^{(2)}-A^{(1)})=0$ in $\Omega$.  
Here $A^{(1)}$ and $A^{(2)}$ are viewed as $1$--forms.   We may notice  that the arguments of \cite[Section 5]{DKSU_2007} are based upon the microlocal
Helgason support theorem combined with the microlocal Holmgren theorem.

Since $\Omega$ is simply connected, we may write 
\begin{equation}
\label{eq_first_A_2-A_1}
A^{(2)}-A^{(1)}=\nabla \Psi,
\end{equation}
 with $\Psi\in C^{5}(\overline{\Omega})$. 

Our next step is to prove that $\Psi=0$ on $\overline{\Omega}$. Writing \eqref{eq_g_0} and \eqref{eq_tilde_g_0} in the Euclidian coordinates and using \eqref{eq_first_A_2-A_1} and \eqref{eq_nabla_cyl}, we have
\begin{align*}
\int_{\Omega_\theta} (x_1+ir )(\nabla \varphi+ i\nabla \psi) \cdot (\nabla \Psi) g_0 dx_1 d r=0,\\ 
\int_{\Omega_\theta} (x_1-ir )(\nabla \varphi- i\nabla \psi) \cdot (\nabla \Psi) \tilde g_0 dx_1 d r=0.
\end{align*}
Thus, by  \eqref{eq_nabla_cyl_2} we get
\[
\int_{\Omega_\theta} (\p_{\bar z}\Psi) g_0 dz\wedge d \bar z=0,\quad 
\int_{\Omega_\theta} (\p_{ z}\Psi) \tilde g_0 dz\wedge d \bar z=0,
\]
for all $\theta \in \mathbb{S}^{n-2}$,  $g_0, \tilde g_0\in C^\infty(\overline{\Omega_\theta}) $ such that $\p_{\bar z}  g_0=0$ in $\Omega_\theta$,  and $\p_{z} \tilde g_0=0$.
By Sard's theorem, the boundary of $\Omega_\theta$ is smooth for almost all $\theta$, see  \cite{DKSU_2007, KnuSalo_2007} for more details. 
Thus, by Stokes' theorem,  we get 
\[
\int_{\p \Omega_\theta} \Psi g_0 dz=0,\quad 
\int_{\p \Omega_\theta} \Psi \tilde g_0  d \bar z=0,
\]
for almost all $\theta\in \mathbb{S}^{n-2}$. 
Hence, taking $\tilde g_0=\overline{g_0}$, we obtain that
\[
\int_{\p \Omega_\theta} \overline{\Psi}g_0dz=0,
\]
and therefore,
\[
\int_{\p \Omega_\theta} (\Re \Psi) g_0dz=0,\quad \int_{\p \Omega_\theta} (\Im \Psi) g_0dz=0,
\]
for any $g_0\in C^\infty(\overline{\Omega_\theta}) $ such that $\p_{\bar z}  g_0=0$ and for almost all $\theta$.  At this point we are precisely in the same situation as the one, described in the beginning of  \cite[Section 6]{DKSU_2007}. Repeating the arguments of that paper, we conclude that $\Psi$ is constant along the connected set $\p \Omega$, and we may and shall assume that $\Psi=0$ along $\p \Omega$.

Going back to \eqref{eq_identity_main_3}, we get
\begin{equation}
\label{eq_identity_main_final_A}
\int_\Omega(\nabla\Psi)\cdot (\nabla \varphi + i \nabla \psi)a_0^{(2)}\overline{a_0^{(1)}}dx=0,
\end{equation}
where $a_0^{(1)},a_0^{(2)}\in C^\infty(\overline{\Omega})$ satisfy
the transport equations \eqref{eq_transport_a_1,a_2}.  
Integrating by parts in \eqref{eq_identity_main_final_A} and using the fact that $\Psi=0$ along $\p \Omega$, we obtain that
\[
\int_{\Omega} \Psi\bigg((\Delta\varphi +i\Delta \psi) +(\nabla \varphi + i \nabla \psi )\cdot\nabla \bigg) a_0^{(2)}\overline{a_0^{(1)}}dx=0.
\]
This implies that 
\begin{equation}
\label{eq_identity_main_final_A_2}
\int_{\Omega} \Psi\bigg((Ta_0^{(2)})\overline{a_0^{(1)}}+ a_0^{(2)} T\overline{a_0^{(1)}}\bigg)dx=0,
\end{equation}
where the operator $T$ is given by  \eqref{eq_operator_T}.
We choose now $a_0^{(1)}\in C^\infty(\overline{\Omega})$ being a solution of 
the equation $T\overline{a_0^{(1)}}=0$ of the form $a_0^{(1)}=e^{\Phi_1}$. As for 
$a_0^{(2)}\in C^\infty(\overline{\Omega})$, we require that  $Ta_0^{(2)}=e^{\Phi_2}g$,
where $\Phi_2\in C^\infty(\overline{\Omega})$ is such that  $Te^{\Phi_2}=0$ and $g\in C^\infty(\overline{\Omega})$ is such that 
$(\nabla \varphi + i \nabla \psi )\cdot\nabla g=0$.  It is clear that $T^2a_0^{(2)}=0$. The existence of such $a_0^{(1)}$ and $a_0^{(2)}$ is explained in Section \ref{sec_CGO}. Thus, it follows from
\eqref{eq_identity_main_final_A_2} that
\begin{equation}
\label{eq_identity_main_final_A_3}
\int_{\Omega} \Psi e^{\Phi_2+\overline{\Phi_1}}gdx=0. 
\end{equation}
In the coordinates $(z,\theta)$, \eqref{eq_identity_main_final_A_3} reads
\begin{equation}
\label{eq_identity_main_final_A_4}
\int_{\Omega} \Psi g e^{\Phi_2+\overline{\Phi_1}}(z-\bar z)^{n-2}dz\wedge d\bar z\wedge d\theta=0. 
\end{equation}
As before,  the function $e^{\Phi_2+\overline{\Phi_1}}(z-\bar z)^ {n-2}$ is nowhere vanishing holomorphic in $z$, see \eqref{eq_miracle_2}, and we shall take 
 $g=(e^{\Phi_2+\overline{\Phi_1}}(z-\bar z)^ {n-2})^{-1}\otimes g_2(\theta)$, where $g_2$ is smooth. Hence, 
\eqref{eq_identity_main_final_A_4} implies that
\[
\int_{\Omega} \Psi(x_1,r,\theta) g_2(\theta) dx_1drd\theta=0,
\]
for any smooth function $g_2(\theta)$. 
We arrive exactly at the formula (6.2) of the paper \cite{DKSU_2007}, and arguing as in \cite{DKSU_2007}, appealing, as before,  to results of analytic microlocal analysis, we obtain that $\Psi=0$ in $\Omega$. 
Hence, we conclude that $A^{(1)}=A^{(2)}$ in $\Omega$.

The final  step in proving Theorem \ref{thm_main} is to show that $q^{(1)}=q^{(2)}$ in $\Omega$.  To this end, substituting $A^{(1)}=A^{(2)}$ in \eqref{eq_identity_main_2} and letting $h\to+0$, we get
\begin{equation}
\label{eq_identity_main_q}
\int_{\Omega}(q^{(2)}-q^{(1)})a_0^{(2)}\overline{a_0^{(1)}}dx=0.
\end{equation}
Here we use the fact that 
\begin{equation}
\label{eq_boundary_int_for_q}
\bigg |\int_{\p \Omega\setminus{\tilde F}} \p_{\nu} (-\Delta(u_1-u_2))\overline{v}dS\bigg| \le \mathcal{O}(h^{1/2}), 
\bigg| \int_{\p \Omega\setminus{\tilde F}} \p_{\nu}(u_1-u_2)\overline{(-\Delta v)}dS\bigg| \le \mathcal{O}(h^{1/2}),
\end{equation}
where $u_2$ and $v$, given by \eqref{eq_u_2_optics} and \eqref{eq_v_optics}.  Notice that \eqref{eq_boundary_int_for_q} is obtained similarly to 
\eqref{eq_boundary_int_1} and \eqref{eq_boundary_int_2} but under the assumption that $A^{(1)}=A^{(2)}$. 

We choose $a_0^{(1)},a_0^{(2)}\in C^\infty(\overline{\Omega})$ to be such that $a_0^{(1)}=e^{\Phi_1}$  and $a_0^{(2)}=e^{\Phi_2}g$,
where $\Phi_1, \Phi_2\in C^\infty(\overline{\Omega})$ are such that $Te^{\overline{\Phi_1}}=0$, $Te^{\Phi_2}=0$, and $g\in C^\infty(\overline{\Omega})$ is such that 
$(\nabla \varphi + i \nabla \psi )\cdot\nabla g=0$.  Thus, 
\eqref{eq_identity_main_q} yields that 
\[
\int_{\Omega}(q^{(2)}-q^{(1)})  e^{\Phi_2+\overline{\Phi_1}}gdx=0.
\]
Arguing in the same way as after \eqref{eq_identity_main_final_A_3}, we conclude that $q^{(1)}=q^{(2)}$. This completes the proof of Theorem \ref{thm_main}.

\section*{Acknowledgements}  

The research of K.K. is supported by the
Academy of Finland (project 125599).  The research of M.L. 
is partially supported 
 by the Academy of Finland Center of Excellence programme 213476. The research of
G.U. is partially supported by the National Science Foundation.


\begin{thebibliography} {1}


\bibitem{AP}  Astala, K.,  P{\"a}iv{\"a}rinta, L.,
     \emph{Calder\'on's inverse conductivity problem in the plane},
   Ann. of Math. \textbf{163} (2006), no. 1, 265--299.



\bibitem{ALP}
Astala, K., P{\"a}iv{\"a}rinta, L., and  Lassas, M.,
     \emph{{C}alder\'on's inverse problem for anisotropic conductivity in
              the plane},
   Comm. Partial Differential Equations \textbf{30} (2005), no. 1--3, 207--224.

\bibitem{Bukh_2008}
Bukhgeim, A., \emph{Recovering a potential from Cauchy data in the two-dimensional case}, J. Inverse Ill-Posed Probl. \textbf{16} (2008), no. 1, 19--33. 

\bibitem{BukhUhl_2002} 
Bukhgeim, A.,  Uhlmann, G.,  \emph{Recovering a potential from partial Cauchy data},  Comm. Partial Differential Equations  \textbf{27}  (2002),  no. 3--4, 653--668. 


\bibitem{Cal} Calderon, A.,
\emph{On an inverse boundary value problem},
Comput. Appl. Math. \textbf{25} (2006), 133--138.



\bibitem{COS_2009}
Caro, P.,  Ola, P.,  and Salo, M.,  \emph{Inverse boundary value problem for Maxwell equations with local data},   Comm. Partial Differential Equations  \textbf{34}  (2009),  no. 10--12, 1425--1464.


\bibitem{DKSU_2007}
Dos Santos Ferreira, D.,  Kenig, C., Sj\"ostrand, J., and Uhlmann, G.,  \emph{Determining a magnetic Schr\"odinger operator from partial Cauchy data},  Comm. Math. Phys.  \textbf{271}  (2007),  no. 2, 467--488. 


\bibitem{EskRal_1995}
Eskin, G.,  Ralston, J.,  \emph{Inverse scattering problem for the Schr\"odinger equation with magnetic potential at a fixed energy},  Comm. Math. Phys.   \textbf{173}  (1995),  no. 1, 199--224.


\bibitem{GGS_book}
Gazzola, F., Grunau, H.-C., and Sweers, G., \emph{Polyharmonic boundary value problems}, Springer-Verlag, Berlin, 2010.



\bibitem{GLU1} Greenleaf, A., Lassas, M., and  Uhlmann, G., \emph{On nonuniqueness for Calder\'on's inverse problem},  Math. Res. Lett.   \textbf{10}  (2003), 685--693.

\bibitem{GLU2} Greenleaf, A., Lassas, M., and Uhlmann, G., \emph{The Calder\'on problem for conormal potentials. I. Global uniqueness and reconstruction},  Comm. Pure Appl. Math.   \textbf{56}  (2003),  no. 3, 328--352. 

\bibitem{GKLU1} Greenleaf, A., Kurylev, Y., Lassas, M., and Uhlmann, G., \emph{Full-wave invisibility of active devices at all frequencies},  Comm. Math. Phys.  \textbf{275}  (2007), 749--789.


\bibitem{GKLUbull} Greenleaf, A., Kurylev, Y., Lassas, M., and  Uhlmann, G., \emph{Invisibility and inverse problems},   Bull. Amer. Math. Soc.   \textbf{46}  (2009), 55--97. 


\bibitem{Grubbbook2009}
Grubb, G., \emph{Distributions and operators}, volume 252 of {\em Graduate Texts
  in Mathematics}.
\newblock Springer, New York, 2009.


\bibitem{GT1}
Guillarmou, C., Tzou, L., \emph{Identification of a connection from Cauchy data space on a Riemann surface with boundary}, 
to appear in
GAFA.

\bibitem{GT2}
Guillarmou, C., Tzou, L., \emph{Calder\'on inverse problem with partial data on Riemann surfaces},  to appear in Duke Math. J.


\bibitem{Ima_Uhl_Yam_2010}
Imanuvilov, O., Uhlmann, G., and Yamamoto, M., \emph{The Calder\'on Problem with Partial Data in Two Dimensions}, 
Journal American Math. Society \textbf{23} (2010), 655--691. 


\bibitem{Isakov_2007}
Isakov, V., \emph{On uniqueness in the inverse conductivity problem with local data}, Inverse Probl. Imaging \textbf{1} (2007), no. 1, 95--105.

\bibitem{KKL}
Katchalov, A., Kurylev, Y., and  Lassas M.,  \emph{Inverse Boundary Spectral Problems}, Monographs and Surveys in Pure and Applied Mathematics 123, Chapman Hall/CRC-press, 2001. 


\bibitem{KenSjUhl2007}
Kenig, C., Sj\"ostrand, J., and Uhlmann, G.,  \emph{The Calder\'on problem with partial data},  Ann. of Math. (2)  \textbf{165}  (2007),  no. 2, 567--591.

\bibitem{KnuSalo_2007} Knudsen, K.,  Salo, M.,  \emph{Determining nonsmooth first order terms from partial boundary measurements}, Inverse Probl. Imaging  \textbf{1}  (2007),  no. 2, 349--369. 




\bibitem{KSVW} Kohn,  R.,  Shen,  H.,
Vogelius,  M., and Weinstein,  M.,
\emph{Cloaking via change of variables in electrical impedance tomography},
Inverse Problems \textbf{24} (2008), 015016.


\bibitem{KrupLassasUhlmann} Krupchyk, K., Lassas, M., and Uhlmann, G., \emph{Inverse boundary value problems for the perturbed polyharmonic operator}, see {\sf http://arxiv.org/abs/1102.5542}. 


\bibitem{LTU} Lassas, M., Taylor, M., and  Uhlmann, G., \emph{The Dirichlet-to-Neumann map for complete Riemannian manifolds with boundary},   Comm. Anal. Geom.  {\bf 11}  (2003),   207--221.


\bibitem{LU} 
Lassas, M., Uhlmann, G., \emph{On determining a Riemannian manifold from the Dirichlet-to-Neumann map},   Ann. Sci. Ecole Norm. Sup. (4)  {\bf 34}  (2001),  771--787.




\bibitem{LeeUhl89} 
Lee, J., Uhlmann, G., \emph{Determining anisotropic real-analytic conductivities by boundary measurements},
Comm. Pure Appl. Math. \textbf{42} (1989), no. 8, 1097--1112. 


\bibitem{Le}
Leonhardt, U., \emph{Optical Conformal
Mapping},  Science {\bf 312} (2006),
1777--1780.



\bibitem{N1} Nachman, A., \emph{Reconstructions from boundary measurements},   Ann. of Math. (2)  \textbf{128}  (1988),  no. 3, 531--576.

\bibitem{N2}
 Nachman, A.,  \emph{Global uniqueness for a two-dimensional inverse boundary value problem},   Ann. of Math. \textbf{143} (1996), no. 1, 71--96.



\bibitem{NakSunUlm_1995}
Nakamura, G.,  Sun, Z., and  Uhlmann, G., \emph{Global identifiability for an inverse problem for the Schr\"odinger equation in a magnetic field},
Math. Ann. \textbf{303} (1995), no. 3, 377--388.



\bibitem{No}
Novikov,  R., \emph{A multidimensional inverse spectral problem for the equation $-\Delta\psi +(v(x)-Eu(x))\psi=0$},  (Russian)  Funktsional. Anal. i Prilozhen. {\bf 22 }(1988),  no. 4, 11--22, 96; 
{translation in  Funct. Anal. Appl.} \textbf{22}  (1988),  no. 4, 263--272 (1989) . 


\bibitem{PSS1}
Pendry,  J., Schurig D., and  Smith S., \emph{Controlling electromagnetic
fields},
{Science}  {\bf 312} (2006), 1780--1782.

\bibitem{PPU} P\"aiv\"arinta,  L., Panchenko,  A.,  and Uhlmann,  G., \emph{Complex
geometrical optics for Lipschitz conductivities},
Rev. Mat. Iberoam {\bf 19} (2003), 57--72.

\bibitem{Salo_2006}
Salo, M.,  \emph{Semiclassical pseudodifferential calculus and the reconstruction of a magnetic field}, Comm. Partial Differential Equations  \textbf{31}  (2006),  no. 10--12, 1639--1666.


\bibitem{Salo_Tzou_2009}
Salo, M., Tzou, L., \emph{Carleman estimates and inverse problems for Dirac operators},  Math. Ann.  \textbf{344}  (2009),  no. 1, 161--184.

\bibitem{Salo_Tzou_2010}
Salo, M., Tzou, L.,  \emph{Inverse problems with partial data for a Dirac system: a Carleman estimate approach},   Adv. Math.  \textbf{225}  (2010),  no. 1, 487--513.


\bibitem{Sun_1993}
Sun, Z.,  \emph{An inverse boundary value problem for Schr\"odinger operators with vector potentials}, Trans. Amer. Math. Soc.  \textbf{338}  (1993),  no. 2, 953--969.



\bibitem{S}
Sylvester, J.,
     \emph{An anisotropic inverse boundary value problem},
   Comm. Pure Appl. Math. \textbf{43}, (1990), no. 2, 201--232.


\bibitem{SU}  
Sylvester, J., Uhlmann, G.,
     \emph{A global uniqueness theorem for an inverse boundary value
              problem},  Ann. of Math., \textbf{125} (1987), no. 1, 153--169.




\end{thebibliography}
\end{document}